\newtheorem{thm}{Theorem}
\newtheorem{lemma}[thm]{Lemma}
\newtheorem{prop}[thm]{Proposition}
\newcommand{\script}[1]{\EuScript{#1}}
\newcommand{\Tr}{\operatorname{\!{^{\mbox{\scriptsize\sf T}}}}\!}
\newcommand{\Circ}{\mathop{\rm Circ}}
\newcommand{\trace} {\mbox{\rm tr}}
\newcommand{\E}{{\mathbb E}\,}
\newcommand{\Zbb}{\mathbb Z}
\newcommand{\Tbb}{\mathbb T}
\newcommand{\yb}{\mathbf  y}
\newcommand{\eb}{\mathbf  e}
\newcommand{\gb}{\mathbf  g}
\newcommand{\db}{\mathbf  d}
 \newcommand{\cb}{\mathbf c}
\newcommand{\qb}{\mathbf q}  
\newcommand{\Ab}{\mathbf A}
\newcommand{\Bb}{\mathbf B}
\newcommand{\Cb}{\mathbf C}
\newcommand{\Eb}{\mathbf E}
\newcommand{\Fb}{\mathbf F}
\newcommand{\Hb}{\mathbf H}
\newcommand{\Ib}{\mathbf I}
\newcommand{\Jb}{\mathbf J}
\newcommand{\Mb}{\mathbf M}
\newcommand{\Pb}{\mathbf P}
\newcommand{\Qb}{\mathbf Q}
\newcommand{\Sb}{\mathbf S}
\newcommand{\Tb}{\mathbf T}
\newcommand{\Sigmab}{\boldsymbol{\Sigma}}
\begin{document}
\title{The Circulant Rational Covariance Extension Problem: The Complete Solution\thanks{This research was supported by grants from
VR, ACCESS and the Italian Ministry for Education and Research (MIUR).} }
\author{Anders Lindquist\thanks{A. Lindquist is with the Department of  Automation, Shanghai Jiao Tong University, Shanghai, China, and the Center for Industrial and Applied Mathematics (CIAM) and the ACCESS Linnaeus Center, Royal Institute of Technology, Stockholm, Sweden, {\tt alq@kth.se}}
and Giorgio Picci
\thanks{Giorgio Picci   is with the
Department of Information Engineering, University of Padova, via
Gradenigo 6/B, 35131 Padova, Italy; e-mail:  
{\tt picci@dei.unipd.it}}
}
%\date{\today}
\markboth{}
{Lindquist and Picci: Circulant Covariance Extension}
\maketitle

\begin{abstract}
The rational covariance extension problem to determine a rational spectral density given a finite number of covariance lags can be seen as a matrix completion problem  to construct an infinite-dimensional positive-definite Toeplitz matrix the north-west corner of which is given. The circulant rational covariance extension problem considered in this paper is a modification of this problem to partial stochastic realization of reciprocal and periodic stationary process, which are better represented on the discrete unit circle $\mathbb{Z}_{2N}$ rather than on the discrete real line $\mathbb{Z}$. The corresponding matrix completion problem then amounts to completing a finite-dimensional Toeplitz matrix that is  circulant. Another important motivation for this problem is that it provides a natural approximation, involving only computations based on the fast Fourier transform,  for the ordinary rational covariance extension problem, potentially leading to an efficient numerical procedure for the latter. The circulant rational covariance extension problem is an inverse problem with infinitely many solutions in general, each corresponding to a bilateral ARMA representation of the underlying periodic (reciprocal) process. In this paper we present a complete smooth parameterization of all solutions and convex optimization procedures for determining them. A procedure to determine which solution that best matches additional data in the form of logarithmic moments is also presented. 
 \end{abstract}
 
 \newpage
 
 \section{Introduction}
 
 \IEEEPARstart{T}{he rational} covariance extension problem or the {\em partial stochastic realization problem\/} has been studied in various degrees of detail in a long series of  papers \cite{Kalman,Gthesis,Georgiou1,BLGM1,BGuL,SIGEST,Byrnes-L-97,BEL1,BEL2,PEthesis,Pavon-F-12}. In a formulation suitable for this paper it can be stated as follows. Given a sequence $(c_0,c_1,\dots,c_n)$ of numbers, with $c_0$ real and the rest possibly complex, such that the Toeplitz matrix
 \begin{equation}
\label{Toeplitz}
\Tb_n=\begin{bmatrix} c_0&c_1&c_2&\cdots&c_n\\
				\bar{c}_1&c_0&c_1&\cdots& c_{n-1}\\
				\bar{c}_2&\bar{c}_1&c_0&\cdots&c_{n-2}\\
				\vdots&\vdots&\vdots&\ddots&\vdots\\
				\bar{c}_n&\bar{c}_{n-1}&\bar{c}_{n-2}&\cdots&c_0
 		\end{bmatrix}
\end{equation}
 is positive definite, find an infinite extension $c_{n+1},c_{n+3},c_{n+3}, \dots$ such that, with $c_{-k}=\bar{c}_k$, $k=1,2,\dots$, the series expansion 
 \begin{equation}
\label{Phi}
\Phi(e^{i\theta})=\sum_{k=-\infty}^\infty c_k e^{-ik\theta}
\end{equation}
converges to a positive spectral density for all $\theta\in [-\pi,\pi]$ which takes the rational form 
\begin{equation}
\label{Phi}
\Phi(z)=\frac{P(z)}{Q(z)},
\end{equation}
where $P$ and $Q$ are symmetric trigonometric polynomial of the form
\begin{equation}
\label{P(z)}
P(e^{i\theta})= \sum_{k=-n}^n p_k e^{-ik\theta}, \quad p_{-k}=\bar{p}_k,
\end{equation}
 of degree $n$ in the case of $Q$ or at most $n$ in the case of $P$. In \cite{Gthesis,Georgiou1} it was shown that there exists a $Q$ for each assignment of $P$ and in \cite{BLGM1} it was finally proved that this assignment is unique and smooth, yielding a complete parameterization suitable for tuning. Consequently, the rational covariance extension problem reduces to a trigonometric moment problem, where, for each $P$, the remaining problem is to determine a unique $Q$ such that 
\begin{equation}
\label{moments}
 \int_{-\pi}^\pi e^{ik\theta}\frac{P(e^{i\theta})}{Q(e^{i\theta})}\frac{d\theta}{2\pi}=c_k, \quad k=0,1,2,\dots, n.
\end{equation}
In \cite{BGuL,SIGEST} a convex optimization procedure to determine these $Q$ was introduced, a result that has  then been generalized in several directions \cite{BEL1,BEL2,PEthesis,BLmoments,BLimportantmoments,BLkrein,Georgiou3,Georgiou-L-03}.
 
The rational covariance extension problem can be seen as a {\em matrix completion problem} to construct an infinite-dimensional positive-definite Toeplitz matrix $\Tb_\infty$ with $\Tb_n$  in its north-west corner, which moreover satisfies the rationality constraint \eqref{Phi}. There is an large literature on band extension of positive-definite Toeplitz matrices, starting with the work of Dym and Gohberg \cite{Dym-G-81} and surveyed in the books \cite{Gohberg-G-K-94,rodman2002abstract}, dealing with the maximum-entropy solution corresponding to  $P\equiv 1$ . 

The {\em circulant rational covariance extension problem\/}, considered in this paper, is a modification of this problem to partial stochastic realization of periodic stationary process, which, as we shall explain in detail below, are better represented on the discrete unit circle $\mathbb{Z}_{2N}$ (the integers modulo $2N$) than on the the discrete real line $\mathbb{Z}$. The corresponding matrix completion problem then amounts to completing a finite-dimensional Toeplitz matrix that is {\em circulant}.   
An important motivation for this problem is that its solution is a natural approximation of the solution to the ordinary rational covariance extension problem that turns out to involve only computations based on the fast Fourier transform and seems to lead to an efficient numerical procedure.

Circulant matrices are Toeplitz matrices with a special circulant structure
\begin{equation}
\label{ }
\Circ\{ m_0,m_1,m_2,\dots, m_\nu\} =
\begin{bmatrix}m_0&m_\nu&m_{\nu-1}&\cdots&m_1\\
			m_1&m_0&m_\nu&\cdots&m_2\\
			m_2&m_1&m_0&\cdots&m_3\\
			\vdots&\vdots&\vdots&\ddots&\vdots\\
			m_\nu&m_{\nu-1}&m_{\nu-2}&\cdots&m_0
\end{bmatrix},
\end{equation}
where the columns (or, equivalently, rows) are shifted cyclically, and where  $m_0,m_1,\dots,m_\nu$ here are taken to be  complex numbers.  In the circulant rational covariance extension problem we consider {\em Hermitian\/} circulant  matrices 
\begin{equation}
\label{M_C}
\Mb:=\Circ\{ m_0,\bar{m}_1,\bar{m}_2,\dots, \bar{m}_N,m_{N-1},\dots,m_2,m_1\}.
\end{equation}
%As we shall explain below, the $2N\times 2N$ matrix $\Cb$ is completely characterized by its {\em symbol\/}
%\begin{equation}
%\label{symbol}
%C(z)=\sum_{k=-N+1}^N c_k z^k, \quad c_{-k}=c_k.
%\end{equation}

Hermitian circulant matrices appear naturally in the context of  periodic stationary  stochastic processes. To see this consider a zero-mean  stationary process $\{y(t)\}$,  in general complex-valued, defined on a finite interval $[-N+1,\,N]$ of the integer line $\Zbb$ and extended to all of $\Zbb$ as a periodic stationary process with  period $2N$; i.e., such that $y(t +2kN) =y(t)$ almost surely.  Processes of this kind can naturally be  defined on the group $\Zbb_{2N}$ of the integers with arithmetics modulo $2N$, and in this setting stationarity can be seen as propagation in time of random variables under the action of a (finite) unitary group.  We shall write the string $\{y(-N+1),\ldots, y(0),\ldots,y(N)\}$ as a $2N$-dimensional column vector $\yb$ and only consider stationary process of {\em full rank}, whose covariance matrix
\begin{equation}
\label{Sigma}
\Sigmab : =  \E\{ \yb\yb^*\}
\end{equation}
 is positive definite, where $^*$ denotes transpose conjugate.    Let $\hat{\E}\{y(t)\mid  y(s),\, s \neq t\}$ be the wide sense conditional mean of $y(t)$ given all $\{y(s),\, s \neq t\}$. Then the error process
\begin{equation}  \label{finconjn}
        d(t)     := y(t)-\hat{\E} \{ y(t)\mid y(s),\, s \neq t\}
  \end{equation}
is orthogonal to all random variables $\{ y(s),\, s \neq t\}$; i.e.,
$\E\{y(t)\,\overline{d(s)}\}= \sigma^2 \, \delta_{ts}$, $t, s \,\in\,\Zbb_{2N}$,
where $\delta$ is the Kronecker function and $\sigma^2$ is a   positive number, or, equivalently,
\begin{equation}
\label{ }
\E\{\yb\db^*\}=\sigma^2 \Ib,
\end{equation}
where $\Ib$ denotes the $2N\times 2N$ identity matrix. Interpreting \eqref{finconjn} in the $\mod 2N$ arithmetics of $\Zbb_{2N}$,    $\yb$ admits a linear representation of the form $\Fb \,\yb= \db$,
where  $\Fb$ is a $2N\times 2N$ circulant matrix  with ones on the main diagonal. Following Masani \cite{Masani-60},  $d$ is called the (unnormalized) {\em conjugate process} of $y$. Therefore, setting $\eb := \frac{1}{\sigma^2}  \db$ and $\Ab  :=  \frac{1}{\sigma^2}\Fb$, we see that a full-rank stationary periodic process admits a normalized representation
\begin{equation}\label{Repres}
\Ab \yb= \eb,\quad \E\{\eb \yb^*\} = \Ib, 
\end{equation}
where $\Ab$ is Hermitian and circulant.  Since $\Ab\E\{\yb\yb^*\}=  \E\{\eb\yb^*\} =  \Ib$, $\Ab$ is also positive definite and the covariance matrix \eqref{Sigma} is  given by
\begin{equation}
\label{cov}
\Sigmab   = \Ab ^{-1},
\end{equation}
which is circulant, since the inverse of a circulant matrix is itself circulant. Therefore, if 
\begin{equation}
\label{ }
c_k:=\E\{y(t+k)\overline{y(t)}\},\quad  k=0,1, 2,\dots, N,
\end{equation}
$\Sigmab$ is precisely the Hermitian  circulant matrix 
\begin{equation}
\label{Sigmacirc}
\Sigmab=\Circ\{ c_0,\bar{c}_1,\bar{c}_2,\dots, \bar{c}_N,c_{N-1},\dots,c_2,c_1\}.
\end{equation} 
In fact, a  stationary process $\yb$  is full-rank periodic in $\Zbb_{2N}$, if and only if $\Sigmab $ is a Hermitian  positive  definite circulant matrix \cite{Carli-FPP}. 

We are now in a position to state the main problem of this paper. Supposing that only the covariance lags  $c_0,c_1,\dots,c_n$ are available for $n<N$, how do we complete the matrix   \eqref{Sigmacirc} with the entries $c_{n+1}, c_{n+2}, \dots,c_N$ so that it is circulant and the covariance matrix \eqref{Sigma} of a stationary periodic process $\yb$ with a spectral density of the rational form \eqref{Phi}. We would like to parametrize the set of  all solutions to this problem.

This can be seen as a generalization of  modeling of {\em reciprocal processes\/} about which there is a large and important literature \cite{Jamison-70,Jamison-74,Krener-86,Krener-86,Krener-86b,Frezza-90,Levy-92,Levy-F-02,Levy-F-K-90}. A first step in this direction was taken in \cite{Carli-FPP},  
where the circulant matrix $\Ab$ in \eqref{cov} is required to be {\em banded of order $n$}; i.e., 
\begin{equation}
\label{ }
\Ab = \Circ\{ a_0,\bar{a}_1,\dots,\bar{a}_n,0,\dots,0,a_n,a_{n-1},\dots,a_1\}.
\end{equation}
For example, a banded matrix of order $n=2$ takes the form
\begin{equation}
\Ab = \begin{bmatrix}
		a_0 & a_1& a_2 &0&\cdots& 0 &\bar{a}_2& \bar{a}_1\\ 
		\bar{a}_1 & a_0&  a_1&a_2 &0&\cdots&0  &\bar{a}_2\\ 
		\bar{a}_2&\bar{a}_1&a_0&a_1&a_2&0&\cdots&0 \\
		0 & \bar{a}_2&\bar{a}_1&a_0&a_1&a_2&0&\cdots \\ 
		\vdots & \ddots&\ddots&\ddots &\ddots&\ddots&\ddots&\vdots\\ 
		0&\ddots& 0&\bar{a}_2& \bar{a}_1& a_0&a_1&a_2\\
		a_2&0&\ddots& 0&\bar{a}_2& \bar{a}_1& a_0&a_1\\
		a_{1} & a_2&0&\cdots& 0&\bar{a}_2& \bar{a}_1& a_0\end{bmatrix} .
\end{equation}
In this case $\yb$ admits a bilateral AR representation
\begin{equation}\label{AR-Repres}
\sum _{k=-n}^{n} a_k y(t-k)= e(t)\,,\quad   a_{-k}= \bar{a}_k 
\end{equation}
for all $t \in \Zbb_{2N}$, which can also be written
\begin{equation}
\label{ }
A(\zeta)y(t)=e(t)
\end{equation}
in terms of the symbol 
\begin{equation}
\label{ }
A(\zeta)=\sum_{k=-n}^n a_k \zeta^{-k}, \quad a_k=\bar{a}_k
\end{equation}
of $\Ab$, where $\zeta$ represents the forward shift on $\Zbb_{2N}$.	These models are representations of stationary {\em reciprocal processes of order $n$}; see \cite{Carli-FPP}, where they are determined by solving a  {\em maximum-entropy\/} problem. 

In this paper we show that for each choice of banded circulant matrix $\Pb$  of order at most $n$, there is an unique banded circulant matrix $\Qb$ of order $n$ such that 	    
\begin{equation}
\label{ }
\Sigmab=\Qb^{-1}\Pb. 
\end{equation}
If the corresponding symbols are $P(\zeta)$ and $Q(\zeta)$, respectively, then, by \eqref{Repres} and     \eqref{cov},  such a solution corresponds to a bilateral ARMA representation
\begin{equation}
\label{ARMA}
Q(\zeta)y(t)=P(\zeta)e(t),
\end{equation}
or, equivalently,
\begin{equation}
\label{ }
\sum _{k=-n}^{n} q_k y(t-k)= \sum _{k=-n}^{n} p_k e(t-k).
\end{equation}
We have therefore a complete parameterization of such representations, and hence of the completions of $\Sigmab$, in terms of the $\Pb$.

In Section~\ref{prelsec}, we review basic facts about circulant matrices and harmonic analysis 
on $\mathbb{Z}_{2N}$ and set up notations. The main results on the complete parameterization of the circulant rational covariance extension problem are presented in Section~\ref{mainsec}, where we also consider the circulant rational covariance extension problem as an approximation procedure for the ordinary rational covariance extension problem.  In Section~\ref{logsec}, following \cite{BEL1,BEL2,PEthesis,PE,Georgiou-L-03}, we show how the parameter $\Pb$ can be determined from logarithmic moments computed from data. 

\section{Preliminaries}\label{prelsec} 

Most of the harmonic analysis of stationary processes on $\Zbb$   carries over naturally, provided the Fourier transform is understood as a mapping from functions defined on $\Zbb_{2N}$ onto complex-valued functions on the   unit circle  of the complex plane, regularly sampled at intervals of length  $\Delta:= \pi/N$.  We shall call this object  the {\em discrete unit circle\/} and denote it by $\Tbb_{2N}$. This Fourier map is usually called the {\em discrete Fourier transform (DFT)}. Next we shall review some pertinent facts and at the same time set up notations.

\subsection{Harmonic analysis on $\mathbb{Z}_{2N}$} 

Let     $\zeta_1:=e^{i\Delta} $ be  the primitive $2N$-th root of unity; i.e., $\Delta=\pi/N$, and define the discrete variable $\zeta$ taking the $2N$  values $ \zeta_{k}\equiv \zeta_{1}^{k}=  e^{i\Delta k}\,;\, k=-N+1,\ldots,0,\ldots,N$  running counterclockwise on the  discrete unit circle $\Tbb_{2N}$.   In particular, we have $\zeta_{-k}=\overline{\zeta_k}$ (complex conjugate).\

The discrete Fourier transform $\script F$ maps a finite signal   $g=\{ g_k; \,k= -N+1,\,\dots ,\, N\}$, into a  sequence  of complex numbers 
\begin{equation} \label{DFT}
G(\zeta_{j}) := \sum_{k=-N+1 }^{N }\, g_k \zeta_{j}^{-k} \,,\qquad j=-N+1,-N+2, \ldots , N.
\end{equation}
It is well-known that  the signal $g$ can be recovered from its DFT $G$  by the formula
\begin{equation} \label{InvDFT}
g_k = \sum_{j=-N+1 }^{N  }\zeta_{j}^k G(\zeta_j) \frac{\Delta}{2\pi} \,,\quad k =  -N+1,-N+2, \dots ,N,
\end{equation}
 where $\frac{\Delta}{2\pi}=\frac{1}{2N}$ plays the role of  a uniform discrete measure with  total mass one on the discrete unit circle $\Tbb_{2N}$. In the sequel it will be useful to write \eqref{InvDFT} as an integral
\begin{equation} \label{InvDFTmeas}
g_k = \int_{-\pi}^\pi e^{ik\theta}G(e^{ik\theta})  d\nu(\theta),\quad k =  -N+1,-N+2, \dots ,N,
\end{equation}
where  $\nu$ is a step function with steps $\frac{1}{2N}$ at each $\zeta_k$; i.e., 
\begin{equation}
\label{nu}
d\nu(\theta) =\sum_{-N+1}^N\delta(e^{i\theta}-\zeta_k)\frac{d\theta}{2N}.
\end{equation}
It easy to check that, if $F$ is the DFT of $\{ f_k\}$, 
\begin{equation}
\label{Plancherel}
\sum_{k=-N+1}^N f_k\bar{g}_k=\frac{1}{2N}\sum_{k=-N+1}^N F(\zeta_k)G(\zeta_{-k})= \int_{-\pi}^\pi F(e^{ik\theta}) G(e^{ik\theta})^* d\nu(\theta).
\end{equation}
This is {\em Plancherel's Theorem\/} for DFT.

It is sometimes convenient to write the discrete Fourier transform \eqref{DFT} in the matrix form 
\begin{equation}
\label{ }
\hat{\gb} =\Fb\gb,
\end{equation}
where $\hat{\gb}:=\big(G(\zeta_{-N+1}),G(\zeta_{-N+2}),\dots,G(\zeta_N)\big)\Tr$, $\gb:=(g_{-N+1},g_{-N+2},\dots,g_N)\Tr$ and $\Fb$ is the nonsingular $2N\times 2N$ Vandermonde matrix
\begin{equation}
\label{ }
\Fb= \left[\begin{array}{cccc}\zeta_{-N+1}^{N-1} & \zeta_{-N+1}^{N-2} & \cdots & \zeta_{-N+1}^{-N} \\ \vdots & \vdots & \cdots & \vdots \\\zeta_{0}^{N-1} & \zeta_{0}^{N-2} & \cdots & \zeta_{0}^{-N}  \\ \vdots & \vdots & \cdots & \vdots \\\zeta_{N}^{N-1} & \zeta_{N}^{N-2} & \cdots & \zeta_{N}^{-N} \end{array}\right] .
\end{equation}
Likewise, it follows from \eqref{InvDFT} that
\begin{equation}
\label{ }
\gb=\frac{1}{2N}\Fb^*\hat{\gb},
\end{equation}
i.e., ${\script F}^{-1}$ corresponds to $\frac{1}{2N}\Fb^*$. Consequently, $\Fb\Fb^*=2N\, \Ib$, and hence $\Fb^{-1}=\frac{1}{2N}\Fb^*$ and $(\Fb^*)^{-1}=\frac{1}{2N}\Fb$.
\subsection{Circulant matrices}

A Hermitian circulant matrix
\begin{equation}
\label{Cb}
\Mb:=\Circ\{ m_0,\bar{m}_1,\bar{m}_2,\dots, \bar{m}_N,m_{N-1},\dots,m_2,m_1\}.
\end{equation}
can be represented in the form
\begin{equation}
\label{S2C}
\Mb =\sum_{k=-N+1}^N m_k\Sb^{-k}, \quad m_{-k}=\bar{m}_k
\end{equation}
where $\Sb$ is the nonsingular $2N\times 2N$  cyclic shift matrix
\begin{equation}
\label{Sb}
\Sb := \left[\begin{array}{cccccc}0 & 1 & 0 & 0 & \dots & 0 \\0 & 0 & 1 & 0 & \dots & 0 \\0 & 0 & 0 & 1 & \dots & 0 \\\vdots & \vdots & \vdots & \ddots & \ddots & \vdots \\0 & 0 & 0 & 0 & 0 & 1 \\1 & 0 & 0 & 0 & 0 & 0\end{array}\right] ,
\end{equation}
which is itself a circulant matrix with symbol $S(\zeta)=\zeta$. 
Clearly $\Sb^{2N}=\Sb^0=\Ib$, and 
\begin{equation}
\label{Sbpowers}
\Sb^{k+2N}=\Sb^k, \qquad \Sb^{2N-k}=\Sb^{-k}=(\Sb^k)\Tr .
\end{equation}
Consequently,
\begin{equation}
\label{circulantcondition}
\Sb\Mb \Sb^*=\Mb,
\end{equation}
and the condition \eqref{circulantcondition} is both necessary and sufficient for $\Mb$ to be circulant. (Clearly, what is said in this section holds for circulant matrices in general, but in this paper we are only interested in the Hermitian ones.)

As before setting $\gb:=(g_{-N+1},g_{-N+2},\dots,g_N)\Tr$, we have
\begin{equation}
\label{ }
[\Sb\gb]_k=g_{k+1}, \quad k\in\mathbb{Z}_{2N}.
\end{equation}
In view of \eqref{DFT}, it  then follows that $\zeta {\script F}(\gb)(\zeta)={\script F}(\Sb\gb)(\zeta)$,
from which we have 
\begin{equation}
\label{Cg}
{\script F}(\Mb\gb)(\zeta)=M(\zeta){\script F}(\gb)(\zeta),
\end{equation}
where the pseudo-polynomial
\begin{equation}
\label{symbol}
M(\zeta)=\sum_{k=-N+1}^N m_k \zeta^{-k}
\end{equation}
is called the  {\em symbol\/} of the circulant matrix $\Mb$.

An important property of circulant matrices is that they are diagonalized by the discrete Fourier transform. More precisely,  it follows from \eqref{Cg} that  
\begin{equation}
\label{ }
\Mb=\frac{1}{2N}\Fb^*\text{\rm diag}\big(M(\zeta_{-N+1}),\dots,M(\zeta_{-1}),M(\zeta_0),M(\zeta_1),\dots,M(\zeta_N)\big)\Fb,
\end{equation}
and consequently the inverse $\Mb^{-1}$ is
\begin{equation}
\label{ }
\Mb^{-1}=\frac{1}{2N}\Fb^*\text{\rm diag}\big(M(\zeta_{-N+1})^{-1},\dots,M(\zeta_N)^{-1}\big)\Fb.
\end{equation}
Since 
\begin{equation*}
\label{ }
\Sb =\frac{1}{2N}\Fb^*\text{\rm diag}\big(\zeta_{-N+1},\dots,\zeta_N\big)\Fb\quad\text{and}\quad \Sb^* =\frac{1}{2N}\Fb^*\text{\rm diag}\big(\zeta_{-N+1}^{-1},\dots,\zeta_N^{-1}\big)\Fb,
\end{equation*}
we have
\begin{displaymath}
\Sb\Mb^{-1}\Sb^*=\Mb^{-1}.
\end{displaymath}
Consequently, $\Mb^{-1}$ is also a circulant matrix with symbol $M(\zeta)^{-1}$. In general, in view of the circulant property \eqref{S2C} and \eqref{Sbpowers}, quotients of symbols are themselves pseudo-polynomials of degree at most $N$ and hence symbols. The coefficients of the corresponding   pseudo-polynomial $M(\zeta)^{-1}$ can be determined by Lagrange interpolation. More generally, if $\Ab$ and $\Bb$ are circulant matrices of the same dimension with symbols $A(\zeta)$ and $B(\zeta)$ respectively, then $\Ab\Bb$ and $\Ab+\Bb$ are circulant matrices with symbols $A(\zeta)B(\zeta)$ and $A(\zeta)+B(\zeta)$, respectively. In fact, the circulant matrices of a fixed dimension form an algebra and the DFT is an {\em algebra   homomorphism} of the set of  circulant matrices  onto the   pseudo-polynomials of degree at most $N$ in the variable $\zeta \in \Tbb_{2N}$. 

\subsection{Spectral representation of periodic stationary stochastic processes}

Let $\{y(t)\}$ be a zero-mean  stationary process defined on a finite interval $[-N+1,\,N]$ of the integer line $\Zbb$ and extended to all of $\Zbb$ as a periodic stationary process with  period $2N$. Let $c_{-N+1},c_{-N+2},\dots,c_{N}$ be the covariance lags $c_k:=\E\{ y(t+k)\overline{y(t)}\}$, and define its discrete Fourier transformation
\begin{equation} \label{c2Phi}
\Phi(\zeta_{j}) := \sum_{k=-N+1 }^{N }\, c_k \zeta_{j}^{-k} \,,\qquad j=-N+1,\dots , N,
\end{equation}
which is a positive real-valued function of $\zeta$.
Then, as seen from \eqref{InvDFT} and \eqref{InvDFTmeas}, 
\begin{equation} \label{Phi2c}
c_k = \sum_{j=-N+1 }^{N  }\zeta_{j}^k \Phi(\zeta_j) \frac{\Delta}{2\pi}=\int_{-\pi}^\pi e^{ik\theta}\Phi(e^{ik\theta})  d\nu(\theta) \,,\quad k =  -N+1, \dots ,N.
\end{equation}
The function $\Phi$ is the {\em spectral densitiy\/} of the process $y$. In fact, let
\begin{equation}
\label{yDFT}
\hat{y}(\zeta_k):= \sum_{t=-N+1}^N y(t)\zeta_k^{-t}, \quad k=-N+1,\dots, N,
\end{equation}
be the discrete Fourier transformation of the process $y$. The random variables \eqref{yDFT} turn out to be uncorrelated, and 
\begin{equation}
\label{yhatyhat}
\frac{1}{2N}\E\{ \hat{y}(\zeta_k)\hat{y}(\zeta_\ell)^*\}=\Phi(\zeta_{k})\delta_{k\ell}.
\end{equation}
This can be seen by a straight-forward calculation noting that 
\begin{equation}
\label{orthogonality}
\frac{1}{2N}\sum_{t=-N+1}^N (\zeta_k\zeta_\ell^*)^t =\delta_{k\ell}.
\end{equation}
Then, we obtain a spectral representation of $\yb$ analogous to the usual one, namely
\begin{equation}
\label{ }
y(t)=\sum_{k=-N+1}^N \zeta_k^t\,\hat{y}(\zeta_k)\frac{1}{2N}=\int_{-\pi}^\pi e^{ik\theta}d\hat{y}(\theta),
\end{equation}
where 
\begin{equation}
\label{ }
d\hat{y}(\theta):=\hat{y}(e^{i\theta})d\nu(\theta).
\end{equation}
It is interesting to note that 
\begin{equation}
\label{ }
\Phi(\zeta_{k})=\frac{1}{2N}E\{ \hat{y}(\zeta_k)\hat{y}(\zeta_k)^*\}, \quad k=-N+1,\dots, N, 
\end{equation}
obtained from \eqref{yhatyhat} has a similar form as the {\em periodogram\/}
\begin{equation}
\label{ }
\hat{\Phi}(\zeta_{k})=\frac{1}{2N} \hat{y}(\zeta_k)\hat{y}(\zeta_k)^*, \quad k=-N+1,\dots, N,
\end{equation}
widely used in statistics.

 \section{The complete solution to the circulant rational covariance extension problem}\label{mainsec}

Given $n<N$ and $c_0,c_1,\dots,c_n$ with a positive definite Toeplitz matrix \eqref{Toeplitz}, find a spectral density $\Phi$ satisfying the moment conditions
\begin{equation}
\label{momentconditions}
\int_{-\pi}^\pi e^{ik\theta}\Phi(e^{i\theta})d\nu=c_k, \quad k=0,1,2,\dots,n.
\end{equation}
Note that this moment condition can also be written as an underdetermined system of linear equations
\begin{equation}
\label{ }
\frac{1}{2N}\sum_{j=-N+1}^N \zeta_j^k\,\Phi(\zeta_j)=c_k \quad k=0,1,2,\dots,n,
\end{equation}
in the variables $x_j=\Phi(\zeta_j)$, $j=-N+1,\dots, -1,0,1,\dots,N$, where the coefficient matrix is a Vandermonde matrix of full rank. Note that it is consistent with \eqref{momentconditions} to define negative moments by setting $c_{-k}=\bar{c}_k$, so that the pseudo-polynomial 
\begin{equation}
\label{C(z)}
C(\zeta)=\sum_{k=-n}^n c_k \zeta^{-k}
\end{equation}
 is the symbol of a banded Hermitian circulant matrix
\begin{equation}
\label{bandedC}
\Cb = \Circ\{ c_0,\bar{c}_1,\dots,\bar{c}_n,0,\dots,0,c_n,c_{n-1},\dots,c_1\}
\end{equation}
of order $n$. We would like to find a {\em rational\/} extension $c_{n+1},c_{n+2},\dots,c_N$ replacing the zeros in $\Cb$ to obtain a Hermitian circulant matrix 
\begin{equation}
\label{Sigmab}
\Sigmab:=\Circ\{ c_0,\bar{c}_1,\bar{c}_2,\dots, \bar{c}_N,c_{N-1},\dots,c_2,c_1\}
\end{equation}
that is positive definite. In terms of stationary periodic processes this is the covarance matrix \eqref{Sigma}.  We proceed to solve this in terms of the symbols, and then interpret the results in terms of matrices. 

\subsection{Circulant rational covariance extension in terms of symbols}

Let $\mathfrak{P}$ be the finite-dimensional space of symmetric trigonometric polynomials \eqref{P(z)},
%\begin{equation}
%\label{P(z)}
%P(e^{i\theta})= \sum_{k=-n}^n p_k e^{ik\theta}, \quad p_{-k}=p_k,
%\end{equation}
and define $\mathfrak{P}_+$ to be the positive cone 
\begin{displaymath}
\mathfrak{P}_+=\{ P\in\mathfrak{P}\mid P(e^{i\theta})> 0 \quad \text{\rm for all}\; \theta\in [-\pi,\pi]\}.
\end{displaymath}
Moreover, let $\mathfrak{C}_+$ be the dual cone of all $\cb=(c_0,c_1,\dots,c_n)$ such that 
\begin{equation}
\label{ }
\langle C,P\rangle := \sum_{k=-n}^n c_k\bar{p}_k \geq 0 \quad \text{\rm for all} \; P\in\overline{\mathfrak{P}_+},
\end{equation}
where the notation  $\langle C,P\rangle$ is motivated by the fact that, by \eqref{Plancherel},  
\begin{equation}
\label{<c,p>int}
\langle C,P\rangle =\int_{-\pi}^\pi C(e^{i\theta})P(e^{i\theta})^*d\nu.
\end{equation}
It can be shown that $\cb\in\mathfrak{C}_+$ if and only if $\Tb_n> 0$.  In fact, if $a(z)=a_0z^n+\dots +a_{n-1}z +a_n$ is a polynomial spectral factor of $P(z)$, i.e., $a(z)a(z)^*=P(z)$, then it is easy to see that
\begin{equation}
\label{ }
\langle C,P\rangle =\mathbf{a}^* \Tb_n\mathbf{a},
\end{equation}
where $\Tb_n$ is the Toeplitz matrix \eqref{Toeplitz} of  $c_0,c_1,\dots,c_n$ and $\mathbf{a}=(a_0,a_1,\dots,a_n)\Tr$.

Next, define the cone 
\begin{equation}
\label{ }
\mathfrak{P}_+(N)=\big\{ P\in\mathfrak{P}\mid P(\zeta_k)> 0 \quad k=-N+1,-N+2,\dots,N\big\}.
\end{equation}
Clearly, $\mathfrak{P}_+(N)\supset \mathfrak{P}_+(2N)\supset \mathfrak{P}_+(4N)\supset\dots\supset \mathfrak{P}_+$, and the corresponding dual cones satisfy 
\begin{equation}
\label{coneinclusion}
\mathfrak{C}_+(N)\subset \mathfrak{C}_+(2N)\subset \mathfrak{C}_+(4N)\subset\cdots  \subset \mathfrak{C}_+.
\end{equation}

\begin{thm}\label{mainthm}
Let $\cb\in\mathfrak{C}_+(N)$. Then, for each $P\in\mathfrak{P}_+(N)$, there is a unique $Q\in\mathfrak{P}_+(N)$ such that 
\begin{equation}
\label{Phi=P/Q}
\Phi=\frac{P}{Q}
\end{equation}
satisfies the moment conditions \eqref{momentconditions}.
\end{thm}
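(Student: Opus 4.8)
The plan is to recognize this as a variational problem and exploit the duality structure hinted at by the cones $\mathfrak{P}_+(N)$ and $\mathfrak{C}_+(N)$. Fixing $P\in\mathfrak{P}_+(N)$, I would introduce the strictly convex functional
\begin{equation}
\label{primalfunctional}
\mathbb{J}_P(\Phi)=\int_{-\pi}^\pi P(e^{i\theta})\log\frac{P(e^{i\theta})}{\Phi(e^{i\theta})}\,d\nu,
\end{equation}
to be minimized over the (finite-dimensional, since everything lives on $\Tbb_{2N}$) set of $\Phi$ taking positive values at the nodes $\zeta_{-N+1},\dots,\zeta_N$ and satisfying the $n+1$ linear moment constraints \eqref{momentconditions}. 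Because the admissible set is the intersection of the positive orthant (in the variables $x_j=\Phi(\zeta_j)$) with an affine subspace cut out by the full-rank Vandermonde system, and because $\cb\in\mathfrak{C}_+(N)$ guarantees this set is nonempty, the strict convexity of $-\log$ yields at most one minimizer. I expect to recover $\Phi=P/Q$ as the optimality condition, with $Q$ playing the role of the Lagrange multiplier.

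First I would form the Lagrangian by adjoining the moment constraints with multipliers $q_0,q_1,\dots,q_n$, assembling them into the pseudo-polynomial $Q(\zeta)=\sum_{k=-n}^n q_k\zeta^{-k}$ with $q_{-k}=\bar q_k$. Stationarity of $\mathbb{J}_P$ with respect to each nodal value $\Phi(\zeta_j)$ gives, after differentiating the integrand and using \eqref{<c,p>int} to identify $\langle C,P\rangle$-type pairings, the pointwise relation $P(\zeta_j)/\Phi(\zeta_j)=Q(\zeta_j)$ at every node, i.e. $\Phi=P/Q$ on $\Tbb_{2N}$. The second step is to pass to the dual problem: substituting $\Phi=P/Q$ back produces a concave dual functional in the multiplier $Q$, whose natural domain is exactly $\mathfrak{P}_+(N)$ (positivity of $Q$ at the nodes is forced by positivity of $\Phi=P/Q$ there). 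I would then show this dual functional is strictly concave and coercive on $\mathfrak{P}_+(N)$, so that it attains a unique interior maximizer $Q\in\mathfrak{P}_+(N)$; the stationarity conditions of the dual are precisely the moment conditions \eqref{momentconditions}, closing the loop.

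The main obstacle is establishing that the optimal $Q$ lies in the \emph{open} cone $\mathfrak{P}_+(N)$ rather than on its boundary, since a boundary point would mean $Q(\zeta_k)=0$ at some node and hence $\Phi$ blowing up there, destroying the moment matching. The standard mechanism is coercivity: I would argue that as $Q$ approaches $\partial\mathfrak{P}_+(N)$, the dual objective tends to $-\infty$, which rules out boundary maxima. This is where the hypothesis $\cb\in\mathfrak{C}_+(N)$ does the essential work — it is exactly the condition under which the linear term $\langle C,Q\rangle$ in the dual cannot be dominated by the logarithmic barrier term, forcing the maximizer inward. Compactness is delicate because $\mathfrak{P}_+(N)$ is not closed, so I would either add a vanishing barrier or directly verify that sublevel sets of the negated dual are bounded using the assumption that $\Tb_n>0$ (equivalently $\cb$ interior to $\mathfrak{C}_+(N)$).

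Two features make me optimistic this goes through cleanly. The problem is genuinely finite-dimensional on $\Tbb_{2N}$, so I avoid the functional-analytic subtleties (weak-* compactness, boundary measures) that complicate the continuous trigonometric moment problem treated in \cite{BGuL,SIGEST}; the DFT diagonalization from the Preliminaries reduces every integral to a finite sum over nodes. Uniqueness is then immediate from strict convexity/concavity, and smoothness of the map $P\mapsto Q$ would follow from the implicit function theorem applied to the moment equations, since the Jacobian is the (negative definite) Hessian of the strictly concave dual. I would present the convex-optimization formulation as the backbone and relegate the positivity-at-the-boundary estimate to the single lemma where the hypothesis $\cb\in\mathfrak{C}_+(N)$ is consumed.
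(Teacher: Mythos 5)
Your argument is correct in outline, but it reaches Theorem~\ref{mainthm} by a genuinely different route than the paper. The paper treats the moment map $F^P$ of \eqref{momentmap} topologically: strict convexity of the dual functional \eqref{dual} gives injectivity, a separate properness argument (Lemmas~\ref{boundedlem} and~\ref{Fproper}) shows preimages of compact sets are compact, and existence then follows from the global inverse function theorem of \cite{BLvariational} for proper injective continuous maps between connected spaces of equal dimension; the optimization statement (Theorem~\ref{optthm}) is then a corollary. You instead prove existence directly by minimizing the dual functional over $\overline{\mathfrak{P}_+(N)}$: coercivity at infinity (the linear growth of $\langle C,Q\rangle$ beating the logarithmic growth of $\int P\log Q\,d\nu$) plus the logarithmic barrier on $\partial\mathfrak{P}_+(N)$ — which genuinely blows up here because $P(\zeta_j)>0$ at every node and the integral is a finite sum — force an interior minimizer, whose stationarity conditions are exactly \eqref{momentconditions}, with uniqueness from strict convexity. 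This is essentially the strategy of \cite{BGuL,SIGEST}, and it is arguably cleaner in the discrete setting where the boundary pathologies of the continuous problem disappear; the coercivity estimate you need is precisely the one the paper proves later as \eqref{Jbound} in the proof of Theorem~\ref{dualboundarythm}, so your route also makes that theorem nearly free, while the paper's topological route buys the homeomorphism statement of Theorem~\ref{homeomorphism} with little extra work. One correction: keep the two positivity hypotheses in their proper roles. The boundary barrier is paid for by $P\in\mathfrak{P}_+(N)$, not by $\cb$; and the coercivity bound $\langle C,Q\rangle\geq\varepsilon\|Q\|_\infty$ must hold for every $Q\in\overline{\mathfrak{P}_+(N)}$, which requires the full hypothesis $\cb\in\mathfrak{C}_+(N)$. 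Your parenthetical identification of this with $\Tb_n>0$ is not right: $\Tb_n>0$ only says $\cb\in\mathfrak{C}_+$, and $\mathfrak{C}_+(N)$ is a strictly smaller cone (cf.\ \eqref{coneinclusion} and Proposition~\ref{feasibilityprop}); a $Q$ that is nonnegative at the nodes but changes sign between them admits no polynomial spectral factor, so the Toeplitz quadratic form alone does not yield $\varepsilon>0$ over all of $\overline{\mathfrak{P}_+(N)}$.
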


For the proof, which is given in the appendix, we need to consider a dual pair of  optimization problems. First consider the primal problem to maximize the generalized entropy
\begin{equation}
\label{primal}
\mathbb{I}_P(\Phi) =\int_{-\pi}^\pi  P(e^{i\theta})\log \Phi(e^{i\theta})d\nu
\end{equation}
subject to the moment conditions \eqref{momentconditions}. The corresponding Lagrangian is then given by 
\begin{eqnarray}
L(\Phi,Q) & = & \mathbb{I}_P(\Phi)+\sum_{k=-n}^n \bar{q}_k\left( c_k -\int_{-\pi}^\pi e^{ik\theta}\Phi(e^{i\theta})d\nu\right) \notag\\
      & = & \int_{-\pi}^\pi  P(e^{i\theta})\log \Phi(e^{i\theta})d\nu +\langle C,Q\rangle -  \int_{-\pi}^\pi Q(e^{i\theta})\Phi(e^{i\theta})d\nu,\label{Lagrangian}
\end{eqnarray}
where $q_0,q_1,\dots,q_n$ are Lagrange multipliers, and where $Q$ is defined as in \eqref{P(z)} with $q_{-k}=\bar{q}_k$.  Since  the dual functional $\sup_\Phi L(\Phi,Q)$ is finite only if $Q\in\overline{\mathfrak{P}_+(N)}$, we may restrict the Lagrange multipliers to that set. Therefore, for each $Q\in\overline{\mathfrak{P}_+(N)}$, consider the directional derivative
\begin{displaymath}
\delta L(\Phi,Q; \delta\Phi)=\int_{-\pi}^\pi \left( \frac{P}{\Phi}-Q\right)\delta\Phi d\nu,
\end{displaymath} 
which equals zero for all variations $\delta\Phi$ if and only if 
\begin{displaymath}
\Phi=\frac{P}{Q}.
\end{displaymath}
Inserting this into \eqref{Lagrangian} we obtain
\begin{displaymath}
\sup_\Phi L(\Phi,Q)=\mathbb{J}_P(Q)+\int_{-\pi}^\pi P(e^{i\theta})\left[\log P(e^{i\theta})-1\right]d\nu,
\end{displaymath}
where 
\begin{equation}
\label{dual}
\mathbb{J}_P(Q)= \langle C,Q\rangle -\int_{-\pi}^\pi  P(e^{i\theta})\log Q(e^{i\theta})d\nu
\end{equation}
and the last term is constant. Hence we may take \eqref{dual} as the dual functional. 

It will be shown below that $\mathbb{J}_P$ is strictly convex, so a stationary point in $\mathfrak{P}_+$, if it exists, would have to be a unique minimizer of $\mathbb{J}_P$. For $k=1,2,\dots,n$, we write $q_k=x_k+y_k$ as a sum of real and imaginary parts and define the partial differential operators 
\begin{subequations}
\begin{align}
\label{partial diff}
 \frac{\partial\phantom{x}}{\partial q_k} &=\frac12\left( \frac{\partial\phantom{x}}{\partial x_k} -i \frac{\partial\phantom{x}}{\partial y_k}\right) \\
 \frac{\partial\phantom{x}}{\partial \bar{q}_k} &=\frac12\left( \frac{\partial\phantom{x}}{\partial x_k} +i \frac{\partial\phantom{x}}{\partial y_k}\right)
 \end{align}
\end{subequations}
in the standard way; see, e.g., \cite[p. 1]{Hormander}. It is immediately seen that 
\begin{equation}
\label{dq/dqbar}
 \frac{\partial q_k}{\partial \bar{q}_k}=0 \quad \text{and}\quad  \frac{\partial\bar{q}_k}{\partial q_k}=0. 
\end{equation}
From this, we readily obtain
\begin{equation}
\label{Jgradient}
\frac{\partial\mathbb{J}_P}{\partial  \bar{q}_k}= 
c_k -\int_{-\pi}^\pi e^{ik\theta}\frac{P(e^{i\theta})}{Q(e^{i\theta})}d\nu, \quad k=1,2,\dots,n.
\end{equation}
Setting \eqref{Jgradient} equal to zero yields the moment conditions \eqref{momentconditions}.
Then the proof of the following theorem follows directly from Theorem~\ref{mainthm}.

\begin{thm}\label{optthm}
Let Let $\cb\in\mathfrak{C}_+(N)$ and $P\in\mathfrak{P}_+(N)$. Then  the problem to maximize \eqref{primal}  subject to the moment conditions \eqref{momentconditions} has a unique solution, namely \eqref{Phi=P/Q}, where $Q$ is the unique optimal solution of the problem to minimize \eqref{dual}
over all $Q\in\mathfrak{P}_+(N)$. 
\end{thm}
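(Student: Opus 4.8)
The plan is to read Theorem~\ref{optthm} as a strong-duality statement pairing the primal problem (maximize \eqref{primal} subject to \eqref{momentconditions}) with the dual problem (minimize \eqref{dual} over $\mathfrak{P}_+(N)$), with both optima attained and linked through $\Phi=P/Q$. Since the Lagrangian computation preceding the statement has already identified \eqref{dual} as the dual functional and computed its gradient \eqref{Jgradient}, the whole argument reduces to two facts: that $\mathbb{J}_P$ is strictly convex on $\mathfrak{P}_+(N)$, and that it has a stationary point there. The second fact is exactly Theorem~\ref{mainthm}, which I may assume: for the given $P\in\mathfrak{P}_+(N)$ it supplies a unique $Q\in\mathfrak{P}_+(N)$ with $\Phi=P/Q$ satisfying the moments.

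First I would prove the deferred strict-convexity claim. Writing the integral in \eqref{dual} as the finite sum $\frac{1}{2N}\sum_{k=-N+1}^{N}P(\zeta_k)\log Q(\zeta_k)$, the term $\langle C,Q\rangle$ is linear in the coefficient vector $\qb=(q_0,q_1,\dots,q_n)$, while each summand $-P(\zeta_k)\log Q(\zeta_k)$ is a strictly convex function of the \emph{real} number $Q(\zeta_k)$, because $P(\zeta_k)>0$ for $P\in\mathfrak{P}_+(N)$ and $-\log$ is strictly convex on the positive reals. The evaluation map $\qb\mapsto\big(Q(\zeta_{-N+1}),\dots,Q(\zeta_N)\big)$ is linear and injective: a symmetric trigonometric polynomial of degree at most $n$ that vanishes at all $2N$ nodes has more than $2n$ zeros (since $n<N$), hence vanishes identically. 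A sum of strictly convex functions precomposed with an injective linear map is strictly convex, so $\mathbb{J}_P$ is strictly convex.

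Combining these gives the dual half at once. By \eqref{Jgradient}, a point $Q\in\mathfrak{P}_+(N)$ is stationary for $\mathbb{J}_P$ precisely when $\Phi=P/Q$ satisfies \eqref{momentconditions}; Theorem~\ref{mainthm} furnishes a unique such $Q$, and strict convexity upgrades this unique stationary point to the unique global minimizer of $\mathbb{J}_P$ over $\mathfrak{P}_+(N)$. For the primal half I would avoid abstract duality and verify directly that $\Phi^\star:=P/Q$ is the unique maximizer. For any feasible $\Phi>0$ on $\Tbb_{2N}$, the inequality $\log x\le x-1$ applied to $x=\Phi/\Phi^\star$, multiplied by $P>0$ and integrated, gives (using $P/\Phi^\star=Q$ and $P=Q\Phi^\star$)
\begin{equation*}
\mathbb{I}_P(\Phi)-\mathbb{I}_P(\Phi^\star)=\int_{-\pi}^\pi P\log\frac{\Phi}{\Phi^\star}\,d\nu\le\int_{-\pi}^\pi Q\,(\Phi-\Phi^\star)\,d\nu=\langle C,Q\rangle-\langle C,Q\rangle=0,
\end{equation*}
where the last step uses that both $\Phi$ and $\Phi^\star$ meet the moment conditions, so $\int Q\Phi\,d\nu=\langle C,Q\rangle=\int Q\Phi^\star\,d\nu$. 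Equality forces $\log x=x-1$ at every node weighted by $P>0$, i.e. $\Phi(\zeta_k)=\Phi^\star(\zeta_k)$ for all $k$; hence $\Phi^\star$ is the unique maximizer.

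I expect the main obstacle to be the strict-convexity step rather than the duality bookkeeping: one must pin down the injectivity of the evaluation map (which is where the hypothesis $n<N$ enters) and be careful that each $Q(\zeta_k)$ is genuinely real-valued, so that the one-dimensional strict convexity of $-\log$ transfers to a strictly convex function of $\qb$. Once Theorem~\ref{mainthm} is invoked to produce the stationary point, the remaining passage from stationary point to minimizer and the direct primal comparison are routine.
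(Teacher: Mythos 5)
Your proof is correct, and its skeleton --- derive the dual functional, identify stationarity of $\mathbb{J}_P$ with the moment conditions via \eqref{Jgradient}, invoke Theorem~\ref{mainthm} for existence and uniqueness of the stationary point, and use strict convexity to upgrade it to the unique global minimizer --- is exactly the paper's, which disposes of the theorem with the single remark that it ``follows directly from Theorem~\ref{mainthm}.'' Where you differ is in how the two supporting facts are established, and in both cases you are more explicit than the paper. For strict convexity the paper computes the Hessian \eqref{Hessian} and asserts it is positive definite; your decomposition of $\mathbb{J}_P$ into the linear term $\langle C,Q\rangle$ plus a sum of strictly convex functions $-P(\zeta_k)\log Q(\zeta_k)$ precomposed with the injective evaluation map $\qb\mapsto\big(Q(\zeta_{-N+1}),\dots,Q(\zeta_N)\big)$ proves the same fact and makes visible where the hypothesis $n<N$ enters (a pseudo-polynomial of degree at most $n$ vanishing at all $2N$ nodes is identically zero); this is in substance the missing proof that \eqref{Hessian} is positive definite. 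For primal optimality the paper relies on the Lagrangian saddle-point bookkeeping ($L(\Phi,Q)=\mathbb{I}_P(\Phi)$ on the feasible set, with $\sup_\Phi L(\Phi,\hat Q)$ attained at $P/\hat Q$), whereas your direct comparison via $\log x\le x-1$, exploiting that $\int Q\Phi\,d\nu=\langle C,Q\rangle$ takes the same value for every feasible $\Phi$, is self-contained and delivers uniqueness of the primal maximizer in the same stroke. Both routes are valid; yours costs a little more writing but removes any reliance on the duality framework being airtight.
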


From \eqref{Jgradient} we have the Hessian 
\begin{equation}
\label{Hessian}
\frac{\partial^2\mathbb{J}_P}{\partial \bar{q}_k\partial q_\ell}= \int_{-\pi}^\pi e^{i(k-\ell)\theta}\frac{P(e^{i\theta})}{Q(e^{i\theta})^2}d\nu,\quad k,\ell=0,1,\dots,n,
\end{equation}
which is Hermitian and positive definite, showing that $\mathbb{J}_P$ is strictly convex. 

Next, we establish that the solution to the circulant rational covariance extension problem depends smoothly on  the parameters $\cb$ and $P$. To this end, for each fixed $P\in\mathfrak{P}_+(N)$, we define the moment map $F^P$ componentwise given by
\begin{equation}
\label{momentmap}
F^P_k(Q)=\int_{-\pi}^\pi e^{ik\theta}\frac{P(e^{i\theta})}{Q(e^{i\theta})}d\nu, \quad k=0,1,\dots,n
\end{equation}
and, for each $c\in\mathfrak{C}_+$, the map $G^c$ sending $P\in\mathfrak{P}_+(N)$ to $Q\in\mathfrak{Q}_+(N):=(F^P)^{-1}(\mathfrak{C}_+(N))$.

The proof of the following theorem is given in the Appendix.

\begin{thm}\label{homeomorphism}
The maps $F^P$ and $G^c$ are homeomorphisms.
\end{thm}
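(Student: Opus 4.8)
The plan is to regard both maps as maps between open subsets of $\mathbb{R}^{2n+1}$ — a symmetric trigonometric polynomial of degree $n$, and a datum $(c_0,\dots,c_n)$ with $c_0$ real, each carry $2n+1$ real degrees of freedom — and to combine the convexity already in hand with a little point‑set topology (invariance of domain plus a connectedness argument). I would dispose of $F^P$ first. On $\mathfrak{P}_+(N)$ the integrand in \eqref{momentmap} is a smooth function of the coefficients of $Q$, since $Q(\zeta_j)>0$, so $F^P$ is smooth; its Jacobian is, up to sign, the Hessian \eqref{Hessian} of $\mathbb{J}_P$, which is positive definite, whence $F^P$ is everywhere a local diffeomorphism. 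By strict convexity the gradient \eqref{Jgradient} vanishes at most once, so $F^P$ is injective, and by the existence part of Theorem~\ref{mainthm} its image is exactly $\mathfrak{C}_+(N)$. An injective local diffeomorphism onto an open set is a diffeomorphism, so $F^P$ is in particular a homeomorphism of $\mathfrak{P}_+(N)$ onto $\mathfrak{C}_+(N)$.

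For $G^c$ I would write the defining relation as $F^P(G^c(P))=c$; since $\partial F^P/\partial Q$ is minus the nonsingular Hessian \eqref{Hessian}, the implicit function theorem makes $G^c$ well defined and smooth. Injectivity I would prove directly. Suppose $G^c(P_1)=G^c(P_2)=Q$. Because $F^P(Q)$ is linear in $P$, the difference $R:=P_1-P_2\in\mathfrak{P}$ satisfies $\int_{-\pi}^\pi e^{ik\theta}\,R/Q\,d\nu=0$ for $|k|\le n$. Let $g$ be the symbol of $\Rb\Qb^{-1}$, i.e. the pseudo‑polynomial of degree $\le N$ agreeing with $R/Q$ on $\Tbb_{2N}$; the vanishing moments say exactly that its coefficients $g_k$ vanish for $|k|\le n$, so $g$ and $R$ have disjoint frequency support. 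Using Plancherel \eqref{Plancherel} and $R=Qg$ on $\Tbb_{2N}$ (with $Q$ real there),
\[
0=\int_{-\pi}^\pi R(e^{i\theta})\,g(e^{i\theta})^*\,d\nu=\int_{-\pi}^\pi |g(e^{i\theta})|^2\,Q(e^{i\theta})\,d\nu ,
\]
and positivity of $Q$ on $\Tbb_{2N}$ forces $g\equiv0$, hence $R=0$ and $P_1=P_2$. Being a continuous injection of the open set $\mathfrak{P}_+(N)\subset\mathbb{R}^{2n+1}$ into $\mathbb{R}^{2n+1}$, $G^c$ is, by invariance of domain, an open map and therefore a homeomorphism onto its open image $\Omega:=G^c(\mathfrak{P}_+(N))$.

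It remains to show $\Omega=\mathfrak{Q}_+(N)$; note that $\mathfrak{Q}_+(N)=(F^P)^{-1}(\mathfrak{C}_+(N))=\mathfrak{P}_+(N)$ because $F^P$ is onto $\mathfrak{C}_+(N)$, so this is the statement that $G^c$ is surjective. Since $\mathfrak{P}_+(N)$ is a connected (indeed convex) open cone and $\Omega$ is nonempty and open, it suffices to show $\Omega$ is closed in $\mathfrak{P}_+(N)$, i.e. that $G^c$ is proper. Concretely, take $Q_k=G^c(P_k)\to Q^\ast\in\mathfrak{P}_+(N)$ and produce an interior limit of $\{P_k\}$. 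The moment identity $F^{P_k}(Q_k)=c$ at $k=0$ gives $\int P_k\,d\nu\le c_0\max_j Q_k(\zeta_j)$, which, with $P_k\ge0$, bounds the coefficients of $P_k$; so along a subsequence $P_k\to P^\ast\in\overline{\mathfrak{P}_+(N)}$ with $F^{P^\ast}(Q^\ast)=c$ by continuity.

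The crux — and the step I expect to be the main obstacle — is to exclude $P^\ast\in\partial\mathfrak{P}_+(N)$. The mechanism is that the barrier term $-\int P^\ast\log Q\,d\nu$ in $\mathbb{J}_{P^\ast}$ loses its coercivity in precisely the directions where $P^\ast$ vanishes (if $P^\ast(\zeta_{j_0})=0$, the $j_0$ term no longer blows up as $Q(\zeta_{j_0})\to0$), so the infimum of $\mathbb{J}_{P^\ast}$ over $\mathfrak{P}_+(N)$ fails to be attained in the interior, and the minimizers $Q_k$ of $\mathbb{J}_{P_k}$ cannot remain in a compact interior subset — contradicting $Q_k\to Q^\ast\in\mathfrak{P}_+(N)$. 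Turning this heuristic into a proof — quantifying how boundary degeneracy of $P$ forces boundary degeneracy of $Q=G^c(P)$ through the entropy functional, and using the strict positivity $c\in\mathfrak{C}_+(N)$ — is the delicate analytic content; everything else is bookkeeping. Once properness is secured, $\Omega$ is both open and closed in the connected set $\mathfrak{P}_+(N)$, hence equal to it, so $G^c$ is onto and therefore a homeomorphism, completing the proof of Theorem~\ref{homeomorphism}.
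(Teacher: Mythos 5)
Your treatment of $F^P$ is sound and essentially the paper's (injectivity from the positive-definite Hessian \eqref{Hessian}, surjectivity from Theorem~\ref{mainthm}), and your injectivity argument for $G^c$ is a clean mirror image of the paper's: where the paper pairs $Q_1-Q_2$ against the vanishing moments of $(Q_1-Q_2)P/(Q_1Q_2)$, you pair $P_1-P_2$ against those of $(P_1-P_2)/Q$ via Plancherel. Both are correct. The continuity via the implicit function theorem also matches the paper.

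The genuine gap is exactly where you flag it, and the mechanism you propose to fill it does not work. You need: if $Q_k=G^c(P_k)\to Q^\ast$ with $Q^\ast$ interior, then the limit $P^\ast$ of $P_k$ is interior. Your heuristic is that a boundary $P^\ast$ would destroy the coercivity of the barrier $-\int P^\ast\log Q\,d\nu$ and force the minimizers $Q_k$ out of every compact interior set; but Theorem~\ref{dualboundarythm} (and its proof) shows the opposite tendency: for any $P\in\overline{\mathfrak{P}_+(N)}\setminus\{0\}$ the minimizing sequence stays bounded and converges to a nonzero $\hat{Q}$ satisfying the moment conditions, and nothing forces that $\hat{Q}$ onto the boundary. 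So boundary degeneracy of $P$ does not propagate to $Q$ in the way you need, and I see no route to properness in your chosen direction. The paper avoids the issue entirely by orienting $G^c$ the other way, as the map $Q\mapsto P$ on $\mathfrak{Q}_+(N)$, and proving properness there: if $P^{(k)}\to\hat{P}$ in a compact subset of $\mathfrak{P}_+(N)$ and $Q^{(k)}\to\hat{Q}$, the identity $\langle C,P^{(k)}\rangle=\int (P^{(k)})^2/Q^{(k)}\,d\nu$ passes to the limit, and $\hat{P}(\zeta_j)>0$ for all $j$ forces $\hat{Q}(\zeta_j)>0$ for all $j$, since a zero of $\hat{Q}$ on the grid would make the (finite) sum blow up. That argument is not reversible: finiteness of $\sum_j \hat{P}(\zeta_j)^2/\hat{Q}(\zeta_j)$ with $\hat{Q}>0$ says nothing about positivity of $\hat{P}$. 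A secondary issue: your identification $\mathfrak{Q}_+(N)=\mathfrak{P}_+(N)$ commits you to proving that \emph{every} $Q\in\mathfrak{P}_+(N)$ arises as a denominator for the given $\cb$, which is stronger than the theorem requires ($\mathfrak{Q}_+(N)$ is the set of admissible denominators, i.e.\ the image) and is not established by anything in the paper. With the map oriented as $Q\mapsto P$ and properness proved as above, injectivity plus properness plus Theorem 2.6 of \cite{BLvariational} closes the argument; your version, as written, does not.
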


In particular, we have established a complete smooth parameterization of all solutions $Q$ to the circulant rational covariance extension problem in terms of $P\in\mathfrak{P}_+(N)$. 

Finally, as a prerequisite for Theorem~\ref{PQthm} in the next section, we show that the map $F^P$ can be continuously extended to the boundary $\mathfrak{P}_+(N)$, as can be seen from the following extension, proved in the Appendix, of the family of dual solutions.

\begin{thm}\label{dualboundarythm}
Let $\cb\in\mathfrak{C}_+(N)$, and let $\overline{\mathfrak{P}_+(N)}$ denote the closure of $\mathfrak{P}_+(N)$. Then, for each $P\in\overline{\mathfrak{P}_+(N)}\setminus \{0\}$, the dual problem to minimize \eqref{dual} over $Q\in\overline{\mathfrak{P}_+(N)}\setminus \{0\}$ has a unique minimizer  $\hat{Q}$, and $P/\hat{Q}$ satisfies the moment conditions \eqref{momentconditions}.
\end{thm}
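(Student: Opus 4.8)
My plan is to exploit that on the discrete circle $\Tbb_{2N}$ every integral against $d\nu$ is the finite sum $\frac1{2N}\sum_{j=-N+1}^N(\,\cdot\,)(\zeta_j)$, so the whole problem is finite dimensional. Writing $P_j:=P(\zeta_j)\ge0$ and $Q_j:=Q(\zeta_j)$, the dual functional \eqref{dual} reads
\begin{equation*}
\mathbb{J}_P(Q)=\langle C,Q\rangle-\frac1{2N}\sum_{j=-N+1}^N P_j\log Q_j ,
\end{equation*}
with the convention $P_j\log Q_j:=0$ when $P_j=0$. I would first record that, exactly as in the interior case, $\mathbb{J}_P$ is convex and lower semicontinuous on the closed cone $\overline{\mathfrak{P}_+(N)}$, takes the value $+\infty$ as soon as $Q_j\downarrow0$ at some node with $P_j>0$, and has a positive semidefinite Hessian \eqref{Hessian}. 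These facts carry over verbatim to $P$ on the boundary; the only novelty is that $P$ may now vanish at some nodes.

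\emph{Existence.} Writing $Q=t\Omega$ with $t>0$ and $\Omega\in\overline{\mathfrak{P}_+(N)}$, $\|\Omega\|=1$, a direct computation gives
\begin{equation*}
\mathbb{J}_P(t\Omega)=t\,\langle C,\Omega\rangle-p_0\log t-\frac1{2N}\sum_j P_j\log\Omega_j ,\qquad p_0:=\int_{-\pi}^\pi P\,d\nu>0 .
\end{equation*}
Since $\Tb_n>0$ places $\cb$ in the interior of $\mathfrak{C}_+(N)$, one has $\langle C,\Omega\rangle\ge\delta>0$ uniformly over the compact cross-section $\{\|\Omega\|=1\}$ of the cone; hence the right-hand side tends to $+\infty$ both as $t\to\infty$ and, through the term $-p_0\log t$, as $t\to0$. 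Combined with the $+\infty$ barrier along the bad part of the boundary, this shows that every sublevel set is a compact subset of $\overline{\mathfrak{P}_+(N)}\setminus\{0\}$ bounded away from the origin, so a minimizer $\hat Q$ exists and, by the barrier, satisfies $\hat Q(\zeta_j)>0$ at every node with $P_j>0$.

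\emph{Uniqueness.} If $Q'$ and $Q''$ both minimize, then by convexity so does the entire segment $Q_t:=(1-t)Q'+tQ''$, on which $\mathbb{J}_P$ is therefore affine; differentiating twice and invoking \eqref{Hessian} forces $\frac1{2N}\sum_j\frac{P_j}{Q_t(\zeta_j)^2}\,|Q'(\zeta_j)-Q''(\zeta_j)|^2\equiv0$, so $Q'$ and $Q''$ agree at every node of the support $\{j:P_j>0\}$. Since a nonzero element of $\mathfrak{P}$ vanishes at no more than $2n<2N$ nodes, this support is nonempty and, whenever it contains at least $2n+1$ nodes, the interpolation property of degree-$\le n$ symmetric trigonometric polynomials forces $Q'=Q''$; the degenerate case of small support requires a separate argument combining the identity $\langle C,Q'-Q''\rangle=0$ (equality of the optimal values) with the interior position of $\cb$.

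\emph{Moment conditions.} For $\varepsilon>0$ set $P_\varepsilon:=P+\varepsilon\in\mathfrak{P}_+(N)$. Theorem~\ref{optthm} furnishes a unique interior minimizer $Q_\varepsilon$ meeting the moment conditions \eqref{momentconditions} exactly, and the coercivity estimates above keep $\{Q_\varepsilon\}$ in a fixed compact subset of $\overline{\mathfrak{P}_+(N)}\setminus\{0\}$, so $Q_\varepsilon\to\hat Q$ along a subsequence, $\hat Q$ being the minimizer for $P$ by lower semicontinuity together with $\mathbb{J}_{P_\varepsilon}\to\mathbb{J}_P$. Passing
\begin{equation*}
c_k=\frac1{2N}\sum_{j}\zeta_j^{\,k}\,\frac{P_j+\varepsilon}{Q_\varepsilon(\zeta_j)},\qquad k=0,1,\dots,n,
\end{equation*}
to the limit is immediate at nodes with $\hat Q(\zeta_j)>0$, and a node with $P_j>0$ cannot have $\hat Q(\zeta_j)=0$ lest the sum diverge. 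The one delicate term — and the main obstacle — occurs at a node $j_0$ with $P_{j_0}=\hat Q(\zeta_{j_0})=0$, where the summand is the indeterminate $\varepsilon/Q_\varepsilon(\zeta_{j_0})$. The heart of the proof is to show that this contribution disappears in the limit, equivalently that $\hat Q$ vanishes to precisely the order of $P$ at such common zeros, so that the quotient $P/\hat Q$ extends to a genuine symbol and $\Phi=P/\hat Q$ inherits the moment identities \eqref{momentconditions}. I would control $Q_\varepsilon(\zeta_{j_0})$ from below in terms of $\varepsilon$ using the stationarity equations for $Q_\varepsilon$ and the uniform bounds, and this estimate is where the real work lies.
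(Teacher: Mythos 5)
Your overall strategy is essentially the paper's: reduce everything to the $2N$ nodes, establish coercivity of $\mathbb{J}_P$ from $\langle C,\Omega\rangle\ge\delta>0$ on the unit sphere of the cone together with the $-p_0\log t$ term, and obtain the moment conditions by approximating $P$ from the interior and passing to the limit in the exact moment identities satisfied by the interior minimizers. (The paper takes an arbitrary sequence $P_\ell\to P$ in $\mathfrak{P}_+(N)$, bounds the corresponding $Q_\ell$ via $L\ge\mathbb{J}_{P_\ell}(Q_\ell)\ge\varepsilon\|Q_\ell\|_\infty-K\log\|Q_\ell\|_\infty$, extracts a convergent subsequence, and takes limits; your $P_\varepsilon=P+\varepsilon$ is a special case, and your direct sublevel-set compactness argument for existence is a sound addition.) The problem is that, as written, you explicitly defer the two steps that constitute the actual content of the theorem, so the proposal is a programme rather than a proof.

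Concretely: (i) \emph{Uniqueness.} Your Hessian argument only pins the minimizer down on the support $S=\{j:P(\zeta_j)>0\}$. A nonzero element of $\mathfrak{P}$ can vanish at up to $2n$ of the $2N$ nodes, so the guaranteed lower bound $|S|\ge 2N-2n$ meets your interpolation threshold $|S|\ge 2n+1$ only when $N>2n$, whereas the theorem assumes merely $n<N$. Your proposed fix --- combining $\langle C,Q'-Q''\rangle=0$ with interiority of $\cb$ --- is not carried out and is not obviously workable, since $Q'-Q''$ is not an element of the cone and interiority of $\cb$ gives no sign information about $\langle C,Q'-Q''\rangle$. (ii) \emph{Moment conditions.} You correctly isolate the indeterminate contribution at a node $j_0$ with $P(\zeta_{j_0})=\hat{Q}(\zeta_{j_0})=0$, where the limit of $\varepsilon/Q_\varepsilon(\zeta_{j_0})$ could in principle deposit a point mass and destroy the identity $c_k=\int e^{ik\theta}(P/\hat{Q})\,d\nu$ --- indeed $P/\hat{Q}$ is not even defined at such a node without the order-of-vanishing statement you announce. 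But you then stop at ``this estimate is where the real work lies,'' which is precisely the assertion \eqref{momentconditions} to be proved. You have in fact put your finger on the genuinely delicate step of the argument (the paper's own proof asserts $\Phi_\ell\to P/\hat{Q}$ and passes to the limit without dwelling on the common-zero case), but identifying the obstacle is not the same as removing it; until (i) and (ii) are supplied, the proof is incomplete.
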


\subsection{Circulant rational covariance extension in terms of matrices}

Next we reformulate the optimization problems in terms of circulant matrices. To this end, we define the circulant matrix
\begin{equation}
\label{ }
\Sigmab=\frac{1}{2N}\Fb^*\text{\rm diag}\big(\Phi(\zeta_{-N+1}),\dots,\Phi(\zeta_N)\big)\Fb
\end{equation}
with symbol \eqref{Phi=P/Q} and the banded numerator matrix 
\begin{equation}
\label{ }
\Pb=\frac{1}{2N}\Fb^*\text{\rm diag}\big(P(\zeta_{-N+1}),\dots,P(\zeta_N)\big)\Fb
\end{equation}
of degree at most $n$ with symbol \eqref{P(z)}. Since $\Phi(\zeta_k)>0$ for all $k$ and  $\log\Phi(\zeta)$ is analytic in the neighborhood of each $\Phi(\zeta_k)>0$, by the spectral mapping theorem \cite[p. 557]{DS} the eigenvalues of $\log\Sigmab$ are just the real numbers $\log\Phi(\zeta_k)$, $k=-N+1,\dots,N$, and hence 
\begin{equation}
\label{}
\log\Sigmab=\frac{1}{2N}\Fb^*\text{\rm diag}\big(\log\Phi(\zeta_{-N+1}),\dots,\log\Phi(\zeta_N)\big)\Fb.
\end{equation}
Consequently, the primal functional \eqref{primal} may be written
\begin{eqnarray}
\int_{-\pi}^\pi  P(e^{i\theta})\log \Phi(e^{i\theta})d\nu & = & \frac{1}{2N}\sum_{j=-N+1}^N P(\zeta_j)\log\Phi(\zeta_j)\notag\\
& = &\frac{1}{2N} \text{tr}\{\Pb\log\Sigmab\} 
\end{eqnarray}
and the moment conditions \eqref{momentconditions} as 
\begin{equation}
\label{momentcondmatrixa}
\frac{1}{2N} \text{tr}\{\Sb^k\Sigmab\} =c_k, \quad k=0,1,\dots,n,
\end{equation}
or, equivalently,  as 
\begin{equation}
\label{momentcondmatrixb}
\Eb_n\Tr\Sigmab \Eb_n =\Tb_n, \quad \text{where\;} \Eb_n =\begin{bmatrix}\Ib_n\\{\bold 0}\end{bmatrix}.
\end{equation}
Consequently the primal problem amounts to maximizing $\text{tr}\{\Pb\log\Sigmab\}$ over all Hermitian, positive definite $2N\times 2N$ matrices subject to \eqref{momentcondmatrixa} or   \eqref{momentcondmatrixb}. For the special case $P\equiv 1$ 
this reduces to the primal problem presented in \cite{Carli-FPP}, except that in  \cite{Carli-FPP} there is an extra condition insuring that $\Sigmab$ is circulant. However, it was shown in \cite{CarliGeorgiou} that this condition is automatically satisfied and is therefore not needed.

In the same way, by \eqref{<c,p>int} the dual functional \eqref{dual} can be written
\begin{equation}
\label{matrixdual}
\begin{split}
\int_{-\pi}^\pi C(e^{i\theta})Q(e^{i\theta})d\nu-\int_{-\pi}^\pi  P(e^{i\theta})\log Q(e^{i\theta})d\nu\\
=\frac{1}{2N}\text{tr}\{\Cb\Qb\} -\frac{1}{2N}\text{tr}\{\Pb\log\Qb\},
\end{split}
\end{equation}
where 
\begin{equation}
\label{ }
\Qb=\frac{1}{2N}\Fb^*\text{\rm diag}\big(Q(\zeta_{-N+1}),\dots,Q(\zeta_N)\big)\Fb
\end{equation}
and $\Cb$ is the banded circulant matrix \eqref{bandedC} formed from $c_0,c_1,\dots,c_n$. 

Consequently, given  $\cb\in\mathfrak{C}_+(N)$, it follows from Theorem~\ref{mainthm} that, for each  Hermitian, positive circulant matrix $\Pb$ that  is banded of degree at most $n$, 
there is a unique $\Sigmab$ given by
\begin{equation}
\label{ }
\Sigmab =\Qb^{-1}\Pb,
\end{equation}
where $\Qb$ is  the unique solution of the problem to minimize 
\begin{equation}
\label{ }
\mathbb{J}_\Pb(\qb)=\frac{1}{2N}\text{tr}\{\Cb\Qb\} -\frac{1}{2N}\text{tr}\{\Pb\log\Qb\}
\end{equation}
over all  $\qb:=(q_0,q_1,\dots,q_n)$ such that the Hermitian, circulant matrix
$$\Qb = \Circ\{ q_0,\bar{q}_1,\dots,\bar{q}_n,0,\dots,0,q_n,q_{n-1},\dots,a_1\}$$
is positive definite. For the maximum-entropy solution corresponding to $\Pb=\Ib$ this reduces to an optimization problem that is different from the one presented in \cite{Carli-FPP}.

Since
\begin{displaymath}
\Qb=\sum_{k=-n}^n q_k\Sb^{-k}, \quad q_{-k}=\bar{q}_k,
\end{displaymath}
we have 
\begin{displaymath}
\frac{\partial\Qb}{\partial q_k}=\Sb^{-k}\quad \text{and}\quad \frac{\partial\Qb}{\partial \bar{q}_k}=\Sb^k,
\end{displaymath}
for $k=0,1,\dots,n$, and therefore
\begin{equation}
\label{gradmatrtix}
\frac{\partial\Jb_\Pb}{\partial \bar{q}_k}=\frac{1}{2N}\text{tr}\{\Sb^k\Cb\} -\frac{1}{2N}\text{tr}\{\Sb^k\Pb\Qb^{-1}\}=c_k-\frac{1}{2N}\text{tr}\{\Sb^k\Pb\Qb^{-1}\},
\end{equation}
where we have used the fact that 
\begin{displaymath}
\text{tr}\{\Sb^k\Cb\}=\sum_{j=-n}^n c_j\text{tr}\{\Sb^{k-j}\}=2N c_k,
\end{displaymath}
as $\text{tr}\{\Sb^k\}\ne 0$ only for $\Sb^0=\Ib$. Setting \eqref{gradmatrtix} equal to zero yields the moment conditions. Likewise, 
\begin{equation}
\label{Hessianmatrix}
\frac{\partial^2\Jb_\Pb}{\partial \bar{q}_k\partial q_k}= \frac{1}{2N}\text{tr}\{\Sb^k\Pb\Qb^{-2}\Sb^{-\ell}\}=\frac{1}{2N}\text{tr}\{\Sb^{k-\ell}\Pb\Qb^{-2}\},
\end{equation}
showing that the Hessian is a Toeplitz matrix. This is the matrix version of \eqref{Hessian}.

In \cite{Carli-FPP} it was observed that the condition that the Toeplitz matrix $\Tb_n$, defined by \eqref{Toeplitz}, is positive definite is a necessary, but not a sufficient, condition for feasibility of the circulant banded covariance extension problem. This can now be understood in the more general setting of moment problems discussed above. In fact, the Toeplitz condition $\Tb_n>0$ is equivalent to $\cb\in\mathfrak{C}_+$. whereas, by Theorem~\ref{optthm},  $\cb\in\mathfrak{C}_+(N)$ is required for feasibility. Since $\mathfrak{C}_+(N)\subset\mathfrak{C}_+$, it follows that the Toeplitz condition cannot be sufficient in general. However, as proved in 
\cite{Carli-FPP}, feasibility is achieved for a sufficiently large $N$. This can also be seen from the following result. 

\begin{prop}\label{feasibilityprop}
The feasibility set $\mathfrak{C}_+(N)\to\mathfrak{C}_+$ as $N\to\infty$. In particular, 
for any $\cb\in\mathfrak{C}_+$, there is an $N_0$ such that $\cb\in\mathfrak{C}_+(N)$ for $N\geq N_0$.   
\end{prop}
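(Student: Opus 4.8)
The ``in particular'' assertion is the substantive content, and together with the trivial inclusion $\mathfrak{C}_+(N)\subset\mathfrak{C}_+$ and the chain \eqref{coneinclusion} it makes precise the claimed convergence $\mathfrak{C}_+(N)\to\mathfrak{C}_+$; so the plan is to prove that second sentence directly by a compactness argument. The starting observation is the characterization recorded above, namely that $\cb\in\mathfrak{C}_+$ is equivalent to $\Tb_n>0$, together with the identity $\langle C,P\rangle=\mathbf{a}^*\Tb_n\mathbf{a}$ valid for any spectral factor $\mathbf{a}$ of $P\in\overline{\mathfrak{P}_+}$. Since every nonzero $P\in\overline{\mathfrak{P}_+}$ admits, by Fej\'er--Riesz, such a factorization with $\mathbf{a}\neq 0$, positive definiteness of $\Tb_n$ upgrades the defining inequality to the strict statement that $\langle C,P\rangle>0$ for every nonzero $P\in\overline{\mathfrak{P}_+}$. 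This strictness is what will drive the contradiction.

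Next I would argue by contradiction. If the conclusion failed, then $\cb\notin\mathfrak{C}_+(N)$ for infinitely many $N$, so there is a sequence $N_j\to\infty$ and, for each $j$, a polynomial $P_j\in\overline{\mathfrak{P}_+(N_j)}$ with $\langle C,P_j\rangle<0$. Because the relevant cones are homogeneous, I may rescale so that $\|P_j\|=1$ in the finite-dimensional space $\mathfrak{P}$. The unit sphere of $\mathfrak{P}$ is compact, so after passing to a subsequence $P_j\to P_*$ with $\|P_*\|=1$; in particular $P_*\neq 0$, and since $|e^{-ik\theta}|=1$, convergence of the coefficients forces $P_j(e^{i\theta})\to P_*(e^{i\theta})$ uniformly in $\theta$.

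The key step is to show $P_*\in\overline{\mathfrak{P}_+}$, i.e.\ $P_*(e^{i\theta_0})\geq 0$ for every $\theta_0$. Here I would use that the sample points $\{\zeta_k\}$ defining $\mathfrak{P}_+(N_j)$ are the $2N_j$-th roots of unity, whose spacing $\pi/N_j$ tends to zero, so they become dense on the unit circle. Fixing $\theta_0$ and choosing sample points $\theta_j^*\to\theta_0$ for each $N_j$, the nonnegativity $P_j(e^{i\theta_j^*})\geq 0$ passes to the limit via the triangle-inequality estimate splitting $P_j(e^{i\theta_j^*})-P_*(e^{i\theta_0})$ into a term controlled by the uniform convergence $P_j\to P_*$ and a term controlled by the continuity of $P_*$. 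Hence $P_*(e^{i\theta_0})\geq 0$, and since $\theta_0$ was arbitrary, $P_*\in\overline{\mathfrak{P}_+}$.

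To finish, continuity of $P\mapsto\langle C,P\rangle$ gives $\langle C,P_*\rangle=\lim_j\langle C,P_j\rangle\leq 0$, whereas the strict positivity established in the first step yields $\langle C,P_*\rangle>0$, because $P_*$ is a nonzero element of $\overline{\mathfrak{P}_+}$ --- a contradiction. I expect the only genuinely delicate point to be this passage from grid-wise to global nonnegativity of the limit $P_*$; the compactness, the homogeneity rescaling, and the continuity of the pairing are all routine. It is worth emphasizing that the hypothesis $\Tb_n>0$ (rather than mere semidefiniteness) is essential: it is precisely what forbids $\langle C,P_*\rangle=0$ for the nonzero limit $P_*$ and thereby closes the contradiction.
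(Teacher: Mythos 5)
Your argument is correct, and it rests on the same underlying observation as the paper's proof --- that the sample points $\zeta_j$ become dense on the unit circle as $N\to\infty$ --- but it closes the argument quite differently. The paper's proof is a three-line informal one: it asserts $\mathfrak{P}_+(N)\to\mathfrak{P}_+$ from density, deduces $\mathfrak{C}_+(N)\to\mathfrak{C}_+$, and then invokes openness of $\mathfrak{C}_+$ together with the monotone chain \eqref{coneinclusion} to conclude that $\cb$ eventually enters and stays in $\mathfrak{C}_+(N)$. You instead run a direct compactness contradiction: normalize a putative sequence of ``bad'' polynomials $P_j\in\overline{\mathfrak{P}_+(N_j)}$ to the unit sphere of the finite-dimensional space $\mathfrak{P}$, extract a limit $P_*\neq 0$, upgrade grid-wise nonnegativity to global nonnegativity via density and uniform convergence, and contradict the strict positivity $\langle C,P_*\rangle>0$ that Fej\'er--Riesz plus $\Tb_n>0$ supplies. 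This buys you something concrete: the inclusions \eqref{coneinclusion} hold only along the doubling subsequence $N,2N,4N,\dots$ (the $2N$-th roots of unity are contained in the $2N'$-th roots only when $N\mid N'$), so the paper's ``remain there as $N$ increases'' does not literally follow from monotonicity for \emph{all} $N\geq N_0$; your subsequence argument, which never uses monotonicity, establishes exactly that stronger conclusion. You are also right that $\Tb_n>0$ (not mere semidefiniteness) is what makes the final contradiction strict. The only cosmetic caveat is the sign of the violating inequality: depending on whether one reads $\mathfrak{C}_+(N)$ as the closed dual cone (as literally written) or its interior (as the paper's later use of ``open'' suggests), the negation gives $\langle C,P_j\rangle<0$ or $\leq 0$ for some nonzero $P_j$; your limit step handles either case, so nothing breaks.
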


\begin{proof}
As  $N\to\infty$, the set $\{\zeta_j;\; j=-N+1,\dots,N\}$ becomes dense on the unit circle, and therefore $\mathfrak{P}_+(N)\to \mathfrak{P}_+$. Consequently, $\mathfrak{C}_+(N)\to\mathfrak{C}_+$, and the convergence is monotone in the sense of \eqref{coneinclusion}. Therefore, since $\mathfrak{C}_+$ is an open set, there is an $N_0$ such that any $\cb\in\mathfrak{C}_+$ will sooner or later end up in  $\mathfrak{C}_+(N)$ and remain there as $N$ increases.
\end{proof}

\subsection{Some computational considerations}\label{compsubsec}

For each choice of $P$, the Hessian of $\mathbb{J}_P$ can be computed explicitly in terms of  $Q$ as the Toeplitz matrix
\begin{equation}
\label{ }
\Hb(\qb)=\begin{bmatrix} h_0&h_1&h_2&\cdots&h_n\\
				\bar{h}_1&h_0&h_1&\cdots& h_{n-1}\\
				\bar{h}_2&\bar{h}_1&h_0&\cdots&h_{n-2}\\
				\vdots&\vdots&\vdots&\ddots&\vdots\\
				\bar{h}_n&\bar{h}_{n-1}&\bar{h}_{n-2}&\cdots&h_0
 		\end{bmatrix},
\end{equation}
where $\qb=(q_0,q_1,\dots,q_n)$ are the coefficients in the pseudo-polynomial $Q$ and 
\begin{equation}
\label{ }
h_k=\int_{-\pi}^\pi e^{ik\theta}\frac{P(e^{i\theta})}{Q(e^{i\theta})^2}d\nu=
\frac{1}{2N}\sum_{j=-N+1}^N\zeta_j^k\frac{P(\zeta_j)}{Q(\zeta_j)^2},
\end{equation}
as can be seen from \eqref{Hessian} or \eqref{Hessianmatrix}. Therefore
Newton's method can be used to find the unique minimizer of the dual problem. The gradient \eqref{Jgradient} at the point $\qb$ is $\left(\cb-\bar{\cb}(\qb)\right)$, where
\begin{equation}
\label{ }
\bar{\cb}(\qb)=\frac{1}{2N}\sum_{j=-N+1}^N\zeta_j^k\frac{P(\zeta_j)}{Q(\zeta_j)},
\end{equation}
and consequently a Newton step amounts to solving the Toeplitz system
\begin{equation}
\label{ }
\Hb(\qb^{(k)})\left( \qb^{(k+1)}-\qb^{(k)}\right)=\cb-\bar{\cb}(\qb^{(k)}).
\end{equation}
Clearly, $\Hb(\qb)$ and $\bar{\cb}$ can be computed by the discrete Fourier transform.

\subsection{An approximation procedure for the ordinary rational covariance extension problem}

Given a $\cb\in\mathfrak{C}_+$ and a $P\in\mathfrak{P}_+$, the {\em ordinary\/} rational covariance extension problem amounts to finding the unique $Q\in\mathfrak{P}_+$ satisfying the moment conditions
\begin{equation}
\label{contmomentproblem}
c_k =\int_{-\pi}^\pi e^{ik\theta}\frac{P(e^{i\theta})}{Q(e^{i\theta})}\frac{d\theta}{2\pi}, \quad k=0,1,\dots,n
\end{equation}
We would like to approximate the solution $Q$ of this problem by the unique solution $Q_N$ of the circulant rational covariance extension problem 
\begin{equation}
\label{circulantmomentproblem}
c_k =\int_{-\pi}^\pi e^{ik\theta}\frac{P(e^{i\theta})}{Q_N(e^{i\theta})}d\nu_N, \quad k=0,1,\dots,n,
\end{equation}
where $d\nu_N$ is the measure \eqref{nu} corresponding to $N$. 

\begin{thm}
Let $\cb\in\mathfrak{C}_+$ and a $P\in\mathfrak{P}_+$. Moreover, for any $N\geq N_0$, where $N_0$ is defined as in Proposition~\ref{feasibilityprop}, let $Q_N$ be the unique solution of \eqref{circulantmomentproblem}, and let $Q$ be the unique solution of \eqref{contmomentproblem}. Then $Q_N\to Q$ as $N\to\infty$. 
\end{thm}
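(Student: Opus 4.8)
The plan is to treat both problems as finite-dimensional convex programs in the coefficient vector $\qb=(q_0,\dots,q_n)$ and to prove convergence of the minimizers by a variational (epi-convergence) argument, exploiting that the discrete measure $d\nu_N$ converges weakly to $\tfrac{d\theta}{2\pi}$ as the $2N$-th roots of unity equidistribute on the circle. By Theorem~\ref{optthm}, $Q_N$ is the unique minimizer over $\mathfrak{P}_+(N)$ of the circulant dual functional
\[
\mathbb{J}^N_P(Q)=\langle C,Q\rangle-\int_{-\pi}^\pi P(e^{i\theta})\log Q(e^{i\theta})\,d\nu_N ,
\]
while by the cited theory for the ordinary problem \cite{BGuL,SIGEST,BEL1} the solution $Q$ of \eqref{contmomentproblem} is the unique minimizer over $\mathfrak{P}_+$ of the continuous dual
\[
\mathbb{J}^\infty_P(Q)=\langle C,Q\rangle-\int_{-\pi}^\pi P(e^{i\theta})\log Q(e^{i\theta})\,\tfrac{d\theta}{2\pi} ,
\]
which moreover lies in the open cone $\mathfrak{P}_+$, with $\mathbb{J}^\infty_P$ strictly convex there. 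Since $\mathfrak{P}_+\subset\mathfrak{P}_+(N)$ and, by Proposition~\ref{feasibilityprop}, $\cb\in\mathfrak{C}_+(N)$ for $N\ge N_0$, the point $Q$ is an admissible test point for every circulant problem with $N\ge N_0$.

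First I would record the comparison $\mathbb{J}^N_P(Q_N)\le\mathbb{J}^N_P(Q)$ and note that, since $Q\in\mathfrak{P}_+$ is strictly positive, $P\log Q$ is continuous on the circle, so the Riemann sum $\int P\log Q\,d\nu_N$ converges to $\int P\log Q\,\tfrac{d\theta}{2\pi}$; hence $\mathbb{J}^N_P(Q)\to\mathbb{J}^\infty_P(Q)$ and the optimal values $\mathbb{J}^N_P(Q_N)$ are bounded above by a constant $C_0$ for all large $N$. Next I would establish a uniform bound on the minimizers. Writing $Q_N=R_N\tilde Q_N$ with $\|\tilde{\qb}^{(N)}\|=1$ and using homogeneity of the logarithm, $\mathbb{J}^N_P(Q_N)=R_N\langle C,\tilde Q_N\rangle-(\log R_N)\int P\,d\nu_N-\int P\log\tilde Q_N\,d\nu_N$. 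As $\tilde Q_N$ has unit-norm coefficients it is uniformly bounded above, so the last term is bounded below; any limit direction $\tilde Q_0$ lies in $\overline{\mathfrak{P}_+}\setminus\{0\}$ (the grids equidistribute, so a uniform limit of functions positive on them is nonnegative), and the identity $\langle C,P\rangle=\mathbf a^*\Tb_n\mathbf a$ together with $\Tb_n>0$ forces $\langle C,\tilde Q_0\rangle>0$. Since the linear term in $R_N$ then dominates the logarithmic one, $R_N\to\infty$ would drive $\mathbb{J}^N_P(Q_N)\to+\infty$, contradicting the bound $C_0$. Hence $\{\qb^{(N)}\}$ is bounded and, passing to a subsequence, $Q_N\to Q_0$ for some $Q_0\in\overline{\mathfrak{P}_+}$.

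The heart of the argument is an epi-liminf inequality $\liminf_N\mathbb{J}^N_P(Q_N)\ge\mathbb{J}^\infty_P(Q_0)$ under the simultaneously varying measure and argument. The linear term passes to the limit by continuity, so it suffices to show $\limsup_N\int P\log Q_N\,d\nu_N\le\int P\log Q_0\,\tfrac{d\theta}{2\pi}$. For this I would use $Q_N\le Q_0+\varepsilon_N$ on the circle with $\varepsilon_N:=\|Q_N-Q_0\|_\infty\to 0$ (uniform convergence of the trigonometric polynomials from coefficient convergence): for fixed $\varepsilon>0$ and $N$ large, $\int P\log Q_N\,d\nu_N\le\int P\log(Q_0+\varepsilon)\,d\nu_N$, whose right-hand side is a Riemann sum of the continuous integrand $P\log(Q_0+\varepsilon)$ and hence converges to $\int P\log(Q_0+\varepsilon)\,\tfrac{d\theta}{2\pi}$; letting $\varepsilon\downarrow 0$ and invoking monotone convergence gives the claim. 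Combining with the previous paragraph, $\mathbb{J}^\infty_P(Q_0)\le\liminf_N\mathbb{J}^N_P(Q_N)\le\lim_N\mathbb{J}^N_P(Q)=\mathbb{J}^\infty_P(Q)$, so $Q_0$ minimizes $\mathbb{J}^\infty_P$ over the closed cone $\overline{\mathfrak{P}_+}$. Since $\mathbb{J}^\infty_P$ is strictly convex on $\mathfrak{P}_+$ and attains an interior minimum at $Q$, the point $Q$ is its unique minimizer over $\overline{\mathfrak{P}_+}$ (a second minimizer $Q_0$ would make $\mathbb{J}^\infty_P$ drop below the minimum along the open segment joining $Q$ to $Q_0$, which lies in $\mathfrak{P}_+$); hence $Q_0=Q$. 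As every subsequence of $\{Q_N\}$ has a further subsequence converging to the same limit $Q$, the full sequence converges, $Q_N\to Q$.

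The main obstacle is the epi-liminf inequality of the third paragraph: one must pass to a joint limit in which both the integration measure $d\nu_N$ and the integrand $P/Q_N$ vary, and in which $Q_0$ may a priori touch the boundary of $\mathfrak{P}_+$ where $\log Q_0$ is unbounded below. The $\log(Q_0+\varepsilon)$ truncation, followed by the two-step Riemann-sum then monotone-convergence limit, is what makes this rigorous. It is worth stressing that, because the logarithmic singularity is integrable, $\mathbb{J}^\infty_P$ remains finite on $\partial\mathfrak{P}_+$, so escape of $Q_N$ to the boundary cannot be precluded by coercivity alone; it is excluded only a posteriori, through the strict-convexity uniqueness in the final step.
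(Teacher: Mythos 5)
Your proof is correct, but it follows a genuinely different route from the paper's. The paper's argument is much shorter: it pushes $Q_N$ through the \emph{continuous} moment map $F$ of \eqref{contmomentproblem}, observes that the circulant moment conditions make $F(Q_N)=\cb^{(N)}$ a Riemann-sum perturbation of $\cb$, and concludes $Q_N=F^{-1}(\cb^{(N)})\to F^{-1}(\cb)=Q$ by invoking the global result that $F$ is a diffeomorphism of $\mathfrak{P}_+$ onto $\mathfrak{C}_+$ \cite[Theorem 1.3]{BLmoments}. You instead run a variational (epi-convergence) argument directly on the dual functionals: coercivity of $\langle C,\cdot\rangle$ via the spectral-factor identity $\langle C,Q\rangle=\mathbf{a}^*\Tb_n\mathbf{a}$ gives compactness of the minimizers, the $\log(Q_0+\varepsilon)$ truncation gives the liminf inequality, and strict convexity identifies the limit. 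What each buys: the paper's proof is economical but leans on the deep diffeomorphism theorem and, more importantly, quietly requires the Riemann-sum error $\|\cb^{(N)}-\cb\|$ to go to zero \emph{while the integrand $P/Q_N$ varies with $N$} --- this needs $Q_N$ uniformly bounded and uniformly bounded away from zero on the whole circle, which the paper does not establish (the phrase ``$Q_N$ is kept fixed'' sidesteps rather than resolves this). Your approach supplies exactly the missing compactness, handles possible drift of $Q_N$ toward $\partial\mathfrak{P}_+$ explicitly, and needs only strict convexity of the continuous dual rather than the full diffeomorphism property; the price is a considerably longer argument. Both proofs are sound in outline, and yours is arguably the more self-contained and rigorous of the two.
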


\begin{proof}
Let $F:\, \mathfrak{P}_+\to\mathfrak{C}_+$ be the map sending $Q$ to $\cb$ as in in \eqref{contmomentproblem}; i.e., $\cb=F(Q)$. Given $Q_N$, define $\cb^{(N)}:=F(Q_N)$ with components
\begin{equation}
\label{Riemannlimit}
c_k^{(N)} =\int_{-\pi}^\pi e^{ik\theta}\frac{P(e^{i\theta})}{Q_N(e^{i\theta})}\frac{d\theta}{2\pi}, \quad k=0,1,\dots,n
\end{equation}
for each $N\geq N_0$.  Since \eqref{circulantmomentproblem} is a Riemann sum converging to \eqref{Riemannlimit} as $N$ in the measure $d\nu_N$ tends to $\infty$ but $Q_N$ is kept fixed,  there is for each $\epsilon>0$ an $N_1\geq N_0$ such that $\|\cb^{(N)}-\cb\|<\epsilon$ for all $N\geq N_1$. Consequently, since  $F(Q_N)=\cb^{(N)}$, $F(Q)=\cb$  and the map $F$ is a diffeomorphism \cite[Theorem 1.3]{BLmoments}, $Q_N\to Q$ as in $N\to\infty$.
\end{proof}

\section{Determining $\Pb$ from logarithmic moments}\label{logsec}

We have shown that the solutions of the circulant rational covariance extension problem are completely parameterized in  a smooth manner by the  numerator trigonometric polynomials $P\in\mathfrak{P}_+(N)$, or, equivalently, by their corresponding banded circulant matrices $\Pb$. Next, we show how $P$ can be determined from the logarithmic moments
\begin{equation}
\label{cepstrum}
m_k=\int_{-\pi}^\pi e^{ik\theta}\log\Phi(e^{i\theta})d\nu, \quad k=1,2,\dots,n.
\end{equation}
In the setting of the classical trigonometric moment problem such moments are known as {\em cepstral coefficients}, and in speech processing, for example, they are estimated for design purposes.

Now consider the problem of finding the spectral density $\Phi$, or, equivalently, the circulant matrix $\Sigmab$, that maximizes the entropy gain
\begin{equation}
\label{entropy}
\mathbb{I}(\Phi) =\int_{-\pi}^\pi \log\Phi(e^{i\theta})d\nu = \frac{1}{2N}\trace\log\Sigmab
\end{equation}
subject to the two sets of moment conditions \eqref{momentconditions} and \eqref{cepstrum}. Such a problem was apparently first considered in the usual trigonometric moment setting in an unpublished technical report \cite{Musicus} and then, independently and in a more elaborate form, in \cite{BEL1,BEL2,PEthesis}. 

Defining 
\begin{equation}
\label{M}
M(\zeta)=\sum_{k=-n}^n m_k \zeta^{-k}, 
\end{equation}
where $m_{-k}=\bar{m}_k$, $k=1,2,\dots,n$, and $m_0=0$, the Langrangian for this optimization problem can be written
\begin{equation}
\begin{split}
L(\Phi,P,Q) = \mathbb{I}(\Phi)+\sum_{k=-n}^n \bar{q}_k\left( c_k -\int_{-\pi}^\pi e^{ik\theta}\Phi(e^{i\theta})d\nu\right) \\- \sum_{k=-n}^n \bar{p}_k\left( m_k -\int_{-\pi}^\pi e^{ik\theta}\log\Phi(e^{i\theta})d\nu\right)\\
     = \langle C,Q\rangle - \langle M,P\rangle  -\int_{-\pi}^\pi Q(e^{i\theta})\Phi(e^{i\theta})d\nu \\
      +\int_{-\pi}^\pi  P(e^{i\theta})\log \Phi(e^{i\theta})d\nu,\label{LagrangianPQ}
      \end{split}
\end{equation}
where $p_1,\dots,p_n,q_0,q_1,\dots,q_n$ are Lagrange multipliers and $p_0:=1$, and $P$ and $Q$ are the corresponding trigonometric polynomials \eqref{P(z)}. For the dual functional $(P,Q)\mapsto \sup_\Phi L(\Phi,P,Q)$ to be finite, $P$ and $Q$ must obviously be restricted to the closure of the cone $\mathfrak{P}_+(N)$. Therefore, for each such choice of $(P,Q)$, we have the directional derivative
\begin{equation}
\label{ }
\delta L(\Phi,P,Q;\delta\Phi) = \int_{-\pi}^\pi \left(\frac{P}{\Phi}-Q\right)\delta\Phi d\nu,
\end{equation}
and hence a stationary point must satisfy 
\begin{equation}
\label{Phi=P/Q2}
\Phi=\frac{P}{Q},
\end{equation}
 which inserted into \eqref{LagrangianPQ} yields
\begin{displaymath}
\sup_\Phi L(\Phi,P,Q)=\mathbb{J}(P,Q)-1,
\end{displaymath}
where 
\begin{equation}
\label{PQdual}
\mathbb{J}(P,Q)= \langle C,Q\rangle -\langle M,P\rangle +\int_{-\pi}^\pi  P(e^{i\theta})\log\frac{P(e^{i\theta})}{Q(e^{i\theta})}d\nu,
\end{equation}
and where we have used the fact that $\int Pd\nu=p_0=1$. Accordingly, we define the bounded subset 
\begin{equation}
\label{ }
\mathfrak{P}^\mathbf{o}_+(N):=\{ P\in\mathfrak{P}_+(N)\mid p_0=1\}.
\end{equation}
of the cone $\mathfrak{P}_+(N)$. Note that $\mathbb{J}$ is convex, but not necessarily strictly convex unless $P$ and $Q$ are coprime, and that 
\begin{subequations}
\begin{eqnarray}
\frac{\partial\mathbb{J}}{\partial \bar{q}_k}&=& c_k -\int_{-\pi}^\pi e^{ik\theta}\frac{P(e^{i\theta})}{Q(e^{i\theta})}d\nu, \quad k=1,\dots, n  \label{gradQ}\\
\frac{\partial\mathbb{J}}{\partial \bar{p}_k} & = & \int_{-\pi}^\pi e^{ik\theta}\log\frac{P(e^{i\theta})}{Q(e^{i\theta})}d\nu -m_k, \quad k=1,\dots,n.\label{gradP}
\end{eqnarray}
\end{subequations}
Consequently, if there exists a stationary point $(P,Q)\in\mathfrak{P}^\mathbf{o}_+(N)\times\mathfrak{P}_+(N)$, \eqref{Phi=P/Q2} will satisfy both the moment conditions \eqref{momentconditions} and \eqref{cepstrum}. 

A proof of the following theorem, which is a circulant version of Theorem~5.3 in \cite{BEL2},  will be given in the Appendix.

\begin{thm}\label{PQthm}
Suppose that $c\in\mathfrak{C}_+(N)$ and $m_1,\dots,m_n$ are complex numbers. Then there exists a solution $(\hat{P},\hat{Q})$ that minimizes $\mathbb{J}(P,Q)$ over all $(P,Q)\in\overline{\mathfrak{P}^\mathbf{o}_+(N)}\times\overline{\mathfrak{P}_+(N)}$, and, for any such solution
\begin{equation}
\label{Phihat2}
\hat{\Phi}=\frac{\hat{P}}{\hat{Q}}
\end{equation}
satisfies the covariance moment conditions \eqref{momentconditions}. If, in addition, $\hat{P}\in\mathfrak{P}_+(N)$, \eqref{Phihat2} also satisfies the logarithmic moment conditions \eqref{cepstrum} and is an optimal solution of the primal problem to maximize the entropy gain \eqref{entropy} given \eqref{momentconditions} and \eqref{cepstrum}. Then $\hat{Q}\in\mathfrak{P}_+(N)$, and the solution is unique. In fact, $\mathbb{J}$ is strictly convex on $\mathfrak{P}^\mathbf{o}_+(N)\times\mathfrak{P}_+(N)$.
\end{thm}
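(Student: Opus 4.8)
\textbf{Proof plan for Theorem~\ref{PQthm}.}
The plan is to prove existence by a coercivity-plus-lower-semicontinuity argument on the closed feasible set, then to read off the moment conditions from stationarity, and finally to upgrade regularity (interior membership) and uniqueness via the strict convexity structure inherited from Theorem~\ref{mainthm}. First I would establish that $\mathbb{J}(P,Q)$ is lower semicontinuous on the closed set $\overline{\mathfrak{P}^\mathbf{o}_+(N)}\times\overline{\mathfrak{P}_+(N)}$. The two linear terms $\langle C,Q\rangle-\langle M,P\rangle$ are continuous, so the only delicate piece is the integral term $\int_{-\pi}^\pi P\log(P/Q)\,d\nu$, which on the discrete circle $\Tbb_{2N}$ is just the finite sum $\frac{1}{2N}\sum_j P(\zeta_j)\log\!\big(P(\zeta_j)/Q(\zeta_j)\big)$. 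Because the measure $\nu$ is purely atomic, each summand $x\log(x/y)$ (with $x,y\ge 0$) is lower semicontinuous with the usual conventions $0\log 0=0$ and $x\log(x/0)=+\infty$ for $x>0$, so lower semicontinuity of the whole functional is immediate.

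Next I would argue \emph{coercivity} so that a minimizer exists over the closed, but unbounded, set. The domain $\overline{\mathfrak{P}^\mathbf{o}_+(N)}$ is bounded because of the normalization $p_0=1$ together with positivity, so $P$ ranges over a compact set; the real issue is that $Q$ can escape to infinity. Here I would use $\cb\in\mathfrak{C}_+(N)$ in an essential way: since $C$ is in the open dual cone relative to $\mathfrak{P}_+(N)$, the linear functional $\langle C,Q\rangle$ grows strictly along every unbounded ray in $\overline{\mathfrak{P}_+(N)}$, while the term $-\int P\log Q\,d\nu$ grows only logarithmically and hence cannot cancel that linear growth. This forces $\mathbb{J}(P,Q)\to+\infty$ as $\|Q\|\to\infty$, giving coercivity; combined with lower semicontinuity on the closed domain, a minimizer $(\hat P,\hat Q)$ exists.

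I would then extract the moment conditions. The gradient components \eqref{gradQ} and \eqref{gradP} were computed before the statement; the point is that the $q$-variables are always interior (an optimal $Q$ cannot sit on $\partial\mathfrak{P}_+(N)$, since the $-\log Q$ barrier drives $\mathbb{J}$ to $+\infty$ there). This forces the stationarity condition \eqref{gradQ} $=0$, which is exactly the covariance moment conditions \eqref{momentconditions}; this gives the first assertion unconditionally, invoking Theorem~\ref{dualboundarythm} to guarantee $\hat P/\hat Q$ is well defined and matches the covariance moments even when $\hat P$ lies on the boundary. If additionally $\hat P\in\mathfrak{P}_+(N)$, then $P$ is interior as well, so \eqref{gradP} $=0$ holds, yielding the logarithmic moment conditions \eqref{cepstrum}; the duality computation preceding the theorem identifies $\hat\Phi=\hat P/\hat Q$ as the corresponding maximizer of the primal entropy gain \eqref{entropy}.

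Finally I would prove \emph{uniqueness} via strict convexity. The Hessian of the integral term is positive semidefinite and degenerates only in directions where $P$ and $Q$ share a common factor; when $\hat P\in\mathfrak{P}_+(N)$ is strictly positive, Theorem~\ref{mainthm} guarantees that the associated $Q$ is coprime with $P$ in the relevant sense, so $\mathbb{J}$ is strictly convex on the interior $\mathfrak{P}^\mathbf{o}_+(N)\times\mathfrak{P}_+(N)$ and the minimizer is unique; this also forces $\hat Q\in\mathfrak{P}_+(N)$. I expect the \textbf{main obstacle} to be the coercivity step: making precise that $\cb\in\mathfrak{C}_+(N)$ (strict interiority of the dual cone) is exactly what dominates the logarithmic barrier along every boundary ray of $\overline{\mathfrak{P}_+(N)}$, and handling the subtle interplay at points where $\hat P$ degenerates on the boundary while $\hat Q$ must remain interior — this is precisely where the boundary extension of the dual family from Theorem~\ref{dualboundarythm} does the heavy lifting.
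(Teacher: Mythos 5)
Your existence argument is essentially the paper's: the paper proves compactness of the sublevel sets $\mathbb{J}^{-1}(\infty,r]$ by splitting $\mathbb{J}=\mathbb{J}_1(P,Q)+\mathbb{J}_2(P)$, bounding $\mathbb{J}_2$ below on the bounded set $\overline{\mathfrak{P}^\mathbf{o}_+(N)}$, and reusing the estimate $\langle C,Q\rangle\geq\varepsilon\|Q\|_\infty$ (valid because $\cb\in\mathfrak{C}_+(N)$) against the logarithmic growth of $-\int P\log Q\,d\nu$ — exactly your coercivity step. Likewise, deducing the covariance moment conditions by observing that $\hat{Q}$ minimizes $\mathbb{J}_{\hat{P}}$ and invoking Theorem~\ref{dualboundarythm} is the paper's route. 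One caution: your parenthetical claim that ``the $q$-variables are always interior'' because of the $-\log Q$ barrier is false when $\hat{P}\in\partial\mathfrak{P}_+(N)$; if $\hat{P}(\zeta_j)=0$ the barrier is inactive at $\zeta_j$ and $\hat{Q}(\zeta_j)=0$ is not excluded (this is precisely why Theorem~\ref{dualboundarythm} is needed, and why the theorem only asserts $\hat{Q}\in\mathfrak{P}_+(N)$ under the extra hypothesis $\hat{P}\in\mathfrak{P}_+(N)$). Since you fall back on Theorem~\ref{dualboundarythm} anyway, the conclusion survives, but the interiority claim as stated would not.

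The genuine gap is in the uniqueness step. You assert that the Hessian ``degenerates only in directions where $P$ and $Q$ share a common factor'' and that Theorem~\ref{mainthm} ``guarantees that the associated $Q$ is coprime with $P$ in the relevant sense.'' Theorem~\ref{mainthm} asserts no such coprimality, and in the discrete setting the degeneracy condition is not about common polynomial factors: the second directional derivative is $\langle \delta P-\hat{P}\hat{Q}^{-1}\delta Q,\ \hat{P}^{-1}(\delta P-\hat{P}\hat{Q}^{-1}\delta Q)\rangle$, which vanishes exactly when $\delta P=\hat{P}\hat{Q}^{-1}\delta Q$ pointwise on $\Tbb_{2N}$, and one must rule out nonzero solutions of this equation within the constrained tangent space. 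The paper does this by a specific argument you are missing: since $p_0=1$ is fixed, $\delta P$ has zero constant term, so $\int \hat{P}\hat{Q}^{-1}\delta Q\,d\nu=\int\delta P\,d\nu=0$; taking $\delta Q=1$ then forces $c_0=\int\hat{P}\hat{Q}^{-1}d\nu=0$, contradicting $\cb\in\mathfrak{C}_+(N)$. Without this (or an equivalent) computation, strict convexity on $\mathfrak{P}^\mathbf{o}_+(N)\times\mathfrak{P}_+(N)$ — and hence uniqueness — is not established. Note also that the normalization $p_0=1$ is essential here, not cosmetic: without it $\mathbb{J}$ is genuinely not strictly convex (scale $P$ and $Q$ together and only the $\langle M,P\rangle$ and $\langle C,Q\rangle$ terms move linearly), so any correct uniqueness proof must use that constraint explicitly, as the paper's does.
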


Consequently, solving these optimization problems will always lead to a spectral density with the prescribed covariance lags $c_0,c_1,\dots,c_n$, provided $\cb\in\mathfrak{C}_+(N)$. However, we have not prescribed any condition on the logarithmic moments $m_1,\dots,m_n$, as such a condition is hard to find and would depend on $\cb$. 
%If $\hat{P}$ lies on $\partial\mathfrak{P}_+(N)$, the boundary of $\mathfrak{P}_+(N)$, $\hat{P}(\zeta_k)=0$ for some $k$, and then the entropy gain $\mathbb{I}(\hat{\Phi})$ as well as the logarithmic moments will be infinite, and hence the moment conditions \eqref{cepstrum} cannot be satisfied. In this case the primal problem has no solution. 
If the moments $c_0,c_1,\dots,c_n$ and $m_1,\dots,m_n$ come from the same theoretical spectral density, the optimal solution \eqref{Phihat2} will also match the cepstral coefficients. In practice, however, $c_0,c_1,\dots,c_n$ and $m_1,\dots,m_n$ will be estimated from different data sets, so there is no guarantee that $\hat{P}$ does not end up on the boundary of $\mathfrak{P}_+(N)$ without satisfying the logarithmic moment conditions. Then the problem needs to be regularized, leading to adjusted values of $m_1,\dots,m_n$  consistent with the covariances $c_0,c_1,\dots,c_n$. 

We shall use a regularization term proposed by P. Enqvist \cite{PEthesis} in the context of the usual rational covariance extension problem. More precisely, we consider the regularized dual problem to find a pair $(P,Q)\in \mathfrak{P}_+^\mathbf{o}(N)\times \mathfrak{P}_+(N)$ minimizing
\begin{equation}
\label{regularizeddual}
\mathbf{J}_\lambda(P,Q)=\mathbb{J}(P,Q) -\lambda \int_{-\pi}^\pi \log P(e^{i\theta})d\nu 
\end{equation}
for some $\lambda>0$, or in circulant matrix form
\begin{equation}
\label{Jlambda}
\mathbf{J}_\lambda(P,Q)=\frac{1}{2N}\trace\{\Cb\Qb\} -\frac{1}{2N}\trace\{\Mb\Pb\}+ \frac{1}{2N}\trace\{\Pb\log\Pb\Qb^{-1}\} - \frac{\lambda}{2N}\trace\{\log\Pb\}.
\end{equation}
This functional will take an infinite value for  $P\in\partial\mathfrak{P}_+(N)$, since then $P(\zeta_k)=0$ for some $k$, and hence the minimum will be in the interior.   Then
\begin{equation}
\label{ }
\frac{\partial\mathbf{J}_\lambda}{\partial \bar{p}_k} = \int_{-\pi}^\pi e^{ik\theta}\log\frac{P(e^{i\theta})}{Q(e^{i\theta})}d\nu -m_k -\varepsilon_k =0, \quad k=1,\dots,n,
\end{equation}
at the minimum, where
\begin{equation}
\label{ }
\varepsilon_k=\int_{-\pi}^\pi e^{ik\theta}\frac{\lambda}{P(e^{i\theta})}d\nu=\frac{\lambda}{2N}\sum_{j=-N+1}^N\frac{\zeta_j^k}{P(\zeta_j)} = \frac{\lambda}{2N}\trace\{\Sb^k\Pb^{-1}\}, 
\end{equation}
and hence the moments \eqref{momentconditions} and \eqref{cepstrum} are matched provided one adjusts the logarithmic moments $m_1,m_2,\dots, m_n$ to $m_1+\varepsilon_1,m_2+\varepsilon_2,\dots, m_n+\varepsilon_n$, the latter of which are consistent with $c_0,c_1,\dots,c_n$.  Modifying the analysis in \cite[p. 188 - 196]{PEthesis} to the present setting it is easy to see that \eqref{Jlambda} is a monotonically  nonincreasing function of $\lambda$, and that the solution tends as $\lambda\to\infty$ to a $(\hat{P},\hat{Q})$ where $\hat{P}\equiv 1$, i.e., the maximum entropy solution. 

Computing the Hessian of $\mathbf{J}_\lambda$, we notice that 
\begin{subequations}
\begin{equation}
\frac{\partial \mathbf{J}_\lambda}{\partial \bar{q}_k\partial q_\ell}=
 \frac{1}{2N}\sum_{j=-N+1}^N\zeta_j^{k-\ell}\frac{P(\zeta_j)}{Q(\zeta_j)^2}
\end{equation}
is the same as  \eqref{Hessian}. Moreover,
\begin{align}
\label{}
  \frac{\partial \mathbf{J}_\lambda}{\partial \bar{q}_k\partial p_\ell}  &=  - \frac{1}{2N}\sum_{j=-N+1}^N\zeta_j^{k-\ell}\frac{1}{Q(\zeta_j)} \\
 \frac{\partial \mathbf{J}_\lambda}{\partial \bar{p}_k\partial p_\ell}   &=   \frac{1}{2N}\sum_{j=-N+1}^N\zeta_j^{k-\ell}\frac{1}{P(\zeta_j)} +\frac{1}{2N}\sum_{j=-N+1}^N\zeta_j^{k-\ell}\frac{\lambda}{P(\zeta_j)^2}
\end{align}
\end{subequations}
Since $\mathbb{J}$ is strictly convex (Theorem~\ref{PQthm}), then so is $\mathbf{J}_\lambda$, so the Hessian is positive definite. 
%\begin{displaymath}
%\begin{bmatrix}
%\Frac{P}{Q^2}&-\Frac{1}{Q}\\
%-\Frac{1}{Q}&\Frac{1}{P}+\Frac{\lambda}{P^2}
%\end{bmatrix}=
%\begin{bmatrix}P\\-Q\end{bmatrix}\begin{bmatrix}P&-Q\end{bmatrix}\Frac{1}{PQ^2}+
%\begin{bmatrix}
%0&0\\
%0&\Frac{\lambda}{P^2}
%\end{bmatrix} >0
%\end{displaymath}
Newton's method can then be used as in Section~\ref{compsubsec} to  determine the unique minimizer. 

\section{Conclusions}

In this paper we have presented a complete parameterization of all solutions to the circulant covariance extension problem. We have shown that determining these solutions involves only computations based on the fast Fourier transform, potentially leading to efficient numerical procedures. This also provides a natural approximation for the ordinary rational covariance extension problem.

The circulant rational covariance extension problem is an inverse problem with infinitely many solutions in general, but by matching additional data in the form of logarithmic moments a unique solution can be determined. 

For many applications it will be important to generalize these results to the multivariable case. This should be straight-forward, but we have chosen to consider only the scalar case in this paper in order  to keep notations reasonably simple and not blur the picture.

\appendix

\subsection{Proof of Theorem~\ref{mainthm}}

Consider the moment map $F^P:\,\mathfrak{P}_+(N)\to \mathfrak{C}_+(N)$ defined by \eqref{momentmap}
%\begin{equation}
%\label{momentmap}
%F^P_k(Q)=\int_{-\pi}^\pi e^{ik\theta}\frac{P(e^{i\theta})}{Q(e^{i\theta})}d\nu, \quad k=0,1,\dots,n
%\end{equation}
for an arbitrary $P\in\mathfrak{P}_+(N)$. This  is a continuous map between connected spaces of  the same (finite) dimension. Therefore, if we can prove that $F^P$ is injective and proper -- i.e., for any compact $K\subset \mathfrak{C}_+(N)$ the inverse image $(F^P)^{-1}(K)$ is compact -- then, by Theorem 2.6 in \cite{BLvariational}, it is a homeomorphism, implying in particular that the system of moment equations $F^P(Q)=\cb$ has a unique solution in $\mathfrak{P}_+(N)$.

\begin{lemma}
The moment map $F^P:\,\mathfrak{P}_+(N)\to \mathfrak{C}_+(N)$ is injective.
\end{lemma}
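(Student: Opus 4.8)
The plan is to prove injectivity of $F^P$ by exploiting the strict convexity of the dual functional $\mathbb{J}_P$ established via the Hessian \eqref{Hessian}. Suppose $Q_1,Q_2\in\mathfrak{P}_+(N)$ both map to the same $\cb$ under $F^P$; that is, both satisfy the moment conditions \eqref{momentconditions} with the same right-hand side. Since $\mathbb{J}_P$ is strictly convex on the convex set $\mathfrak{P}_+(N)$ (as its Hessian \eqref{Hessian} is Hermitian positive definite), and since the gradient \eqref{Jgradient} of $\mathbb{J}_P$ vanishes at $Q$ precisely when the moment conditions hold, both $Q_1$ and $Q_2$ are stationary points of $\mathbb{J}_P$. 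A strictly convex function on a convex set has at most one stationary point, so $Q_1=Q_2$, and injectivity follows.

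First I would verify that $\mathfrak{P}_+(N)$ is indeed a convex set, which is immediate since it is defined by the finitely many linear inequalities $P(\zeta_k)>0$ on the coefficient vector. Next I would recall from \eqref{Jgradient} that the condition $\partial\mathbb{J}_P/\partial\bar q_k=0$ for $k=1,\dots,n$, together with the matching of $c_0$, is exactly equivalent to $F^P(Q)=\cb$; thus any $Q$ with $F^P(Q)=\cb$ is a critical point of $\mathbb{J}_P$. Then the key analytic input is that the Hessian \eqref{Hessian} is positive definite at every $Q\in\mathfrak{P}_+(N)$: this is clear because, writing any nonzero vector $\ab=(a_0,\dots,a_n)$ and $a(e^{i\theta})=\sum_k a_k e^{-ik\theta}$, one has $\ab^*\Hb\ab=\int_{-\pi}^\pi |a(e^{i\theta})|^2 P(e^{i\theta})/Q(e^{i\theta})^2\,d\nu>0$ since $P>0$ on the support of $d\nu$ and $a$ cannot vanish at all $2N$ points $\zeta_k$ for a nonzero polynomial of degree $n<N$. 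Strict convexity on the convex domain then yields uniqueness of critical points.

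The main obstacle, and the point requiring genuine care rather than routine calculation, is the transition from "positive definite Hessian" to "at most one stationary point." This is standard for a convex function that is finite on all of $\mathfrak{P}_+(N)$, but one must confirm that the chain of reasoning is valid in the complex-coefficient setting with the Wirtinger-style operators \eqref{partial diff}: I would check that the real Hessian of $\mathbb{J}_P$ viewed as a function of the real variables $(q_0,\mathrm{Re}\,q_1,\mathrm{Im}\,q_1,\dots)$ is positive definite whenever the complex Hessian \eqref{Hessian} is, so that strict convexity in the honest real sense holds. Given that, the argument closes: if $Q_1\neq Q_2$ were two critical points, then along the segment $Q_t=(1-t)Q_1+tQ_2$ the function $t\mapsto\mathbb{J}_P(Q_t)$ would be strictly convex with vanishing derivative at both endpoints $t=0$ and $t=1$, an impossibility. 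Hence $Q_1=Q_2$ and $F^P$ is injective.
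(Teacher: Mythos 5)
Your proposal is correct and follows essentially the same route as the paper: both identify solutions of the moment equations with stationary points of the strictly convex dual functional $\mathbb{J}_P$, whose Hessian \eqref{Hessian} is positive definite, and conclude that the stationary point (hence the preimage) is unique. Your added checks — convexity of $\mathfrak{P}_+(N)$, positivity of the quadratic form via the fact that a nonzero pseudo-polynomial of degree $n<N$ cannot vanish at all $2N$ points $\zeta_j$, and the passage from the complex Wirtinger Hessian to real strict convexity — are all sound refinements of details the paper leaves implicit.
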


\begin{proof}
From \eqref{dual} we have the gradient \eqref{Jgradient} 
%\begin{equation}
%\label{Jgradient}
%\frac{\partial\mathbb{J}_P}{\partial q_k}= c_k -\int_{-\pi}^\pi e^{ik\theta}\frac{P(e^{i\theta})}{Q(e^{i\theta})}
%\end{equation}d\nu
and the Hessian \eqref{Hessian},
%\begin{equation}
%\label{Hessian}
%\frac{\partial^2\mathbb{J}_P}{\partial \bar{q}_k\partial \bar{q}_\ell}= \int_{-\pi}^\pi e^{i(k-\ell)\theta}\frac{P(e^{i\theta})}{Q(e^{i\theta})^2}d\nu,
%\end{equation}
which is positive definite. Therefore, $\mathbb{J}_P$ is strictly convex, and any stationary point is a solution to the moment equations \eqref{momentconditions}, which must be a unique if it exists. Hence $F^P$ is injective.
\end{proof}

It remains to show that there exists a solution to the moment equations \eqref{momentconditions}.
%To this end, we first note that the domain of $F^P$ is actually a proper subset of $\mathfrak{P}_+(N)$
%that is bounded.

\begin{lemma}\label{boundedlem}
Suppose the Toeplitz matrix $\Tb_n$ is positive definite; i.e., $\cb\in\mathfrak{C}_+$. Then, for any compact $K\subset \mathfrak{C}_+(N)$, the inverse image  $(F^P)^{-1}(K)$ is bounded.
\end{lemma}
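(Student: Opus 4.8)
The plan is to prove boundedness directly, turning it into a single coercivity estimate and avoiding any argument by contradiction. The key observation is that along the constraint set the linear functional $Q\mapsto\langle C,Q\rangle$ is \emph{pinned} to a fixed finite value, whereas on $\overline{\mathfrak{P}_+(N)}$ it is bounded below by a positive multiple of the norm; combining the two immediately caps $\|Q\|$.

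First I would record the moment identity. Fix $P\in\mathfrak{P}_+(N)$ and set $p_0:=\int_{-\pi}^\pi P\,d\nu=\frac{1}{2N}\sum_{j=-N+1}^N P(\zeta_j)>0$. For any $Q\in(F^P)^{-1}(K)$, write $\cb^Q:=F^P(Q)$ with components $c_k^Q$ and symbol $C^Q$, and recall $q_{-k}=\bar q_k$. Then
\[
\langle C^Q,Q\rangle=\sum_{k=-n}^n c_k^Q\,\bar q_k=\int_{-\pi}^\pi\Big(\sum_{k=-n}^n \bar q_k e^{ik\theta}\Big)\frac{P(e^{i\theta})}{Q(e^{i\theta})}\,d\nu=\int_{-\pi}^\pi Q(e^{i\theta})\frac{P(e^{i\theta})}{Q(e^{i\theta})}\,d\nu=p_0,
\]
where I used that, since $Q$ is real-valued, $\sum_{k=-n}^n\bar q_k e^{ik\theta}=\overline{Q(e^{i\theta})}=Q(e^{i\theta})$. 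Thus $\langle C^Q,Q\rangle$ equals the constant $p_0$ for \emph{every} element of $(F^P)^{-1}(K)$.

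Next I would produce the coercivity constant. Since $K$ is compact and contained in the \emph{open} dual cone $\mathfrak{C}_+(N)$, every $C\in K$ satisfies $\langle C,R\rangle>0$ for all $R\in\overline{\mathfrak{P}_+(N)}\setminus\{0\}$. Hence
\[
\gamma:=\min\big\{\langle C,R\rangle\ \big|\ C\in K,\ R\in\overline{\mathfrak{P}_+(N)},\ \|R\|=1\big\}
\]
is attained (the unit sphere of the closed cone $\overline{\mathfrak{P}_+(N)}$ is compact, as is $K$) and is strictly positive. Applying this with $R=Q/\|Q\|\in\mathfrak{P}_+(N)$ and $C=C^Q\in K$, and using linearity in $Q$, gives $\langle C^Q,Q\rangle\ge\gamma\|Q\|$. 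Combined with the identity above, $\gamma\|Q\|\le p_0$, so $\|Q\|\le p_0/\gamma$ for all $Q\in(F^P)^{-1}(K)$, which is precisely the desired bound.

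The step I expect to be the genuine obstacle is the coercivity, and specifically the cone bookkeeping behind it. A member $Q$ of $\mathfrak{P}_+(N)$ is only required to be positive at the grid points $\zeta_{-N+1},\dots,\zeta_N$, not on the whole circle, so $Q$ need not lie in $\overline{\mathfrak{P}_+}$ and the hypothesis $\cb\in\mathfrak{C}_+$ (positivity against $\overline{\mathfrak{P}_+}$) is by itself too weak to bound $\langle C^Q,Q\rangle$ from below. The point is that the strict positivity must instead be extracted from the location of $K$ \emph{inside the open cone} $\mathfrak{C}_+(N)$, the dual of the larger cone $\mathfrak{P}_+(N)$; this is exactly what forces $\gamma>0$, and it is where compactness of $K$ is essential. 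Everything else is direct verification.
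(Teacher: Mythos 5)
Your proof is correct, and its first half coincides with the paper's: both arguments start from the observation that every $Q$ in the preimage satisfies $\langle C,Q\rangle=\int_{-\pi}^\pi P\,d\nu=:\kappa$, a fixed constant. Where you diverge is in how this constant is converted into a bound on $\|\qb\|$. The paper takes a stable spectral factor $a(z)a(z)^*=Q(z)$, writes $\kappa=\mathbf{a}^*\Tb_n\mathbf{a}$, and uses compactness of $K$ to get $\Tb_n\geq\varepsilon\Ib$ uniformly, hence $\|\mathbf{a}\|^2\leq\kappa/\varepsilon$. You instead extract a coercivity constant $\gamma>0$ directly from the dual pairing by minimizing $\langle C,R\rangle$ over the compact set $K\times\{R\in\overline{\mathfrak{P}_+(N)}\mid\|R\|=1\}$, giving $\|Q\|\leq\kappa/\gamma$ at once. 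Your route is more elementary and, notably, more robust: the Fej\'er--Riesz factorization the paper invokes requires $Q\geq 0$ on the whole unit circle, whereas elements of $\mathfrak{P}_+(N)$ are constrained only at the $2N$ grid points, so the spectral factor need not exist for every $Q$ in the domain of $F^P$ --- exactly the cone-bookkeeping subtlety you flag at the end. By drawing strict positivity from $K\subset\mathfrak{C}_+(N)$ (positivity against the larger cone $\overline{\mathfrak{P}_+(N)}$) rather than from $\Tb_n>0$ alone, you avoid this issue entirely; indeed, your estimate is the same coercivity bound the paper itself uses later in the proof of Theorem~\ref{dualboundarythm}, so your version is arguably the cleaner and more internally consistent of the two.
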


\begin{proof}
Suppose $Q$ satisfies the moment equations $F^P(Q)=\cb$ for some $\cb\in\mathfrak{C}_+(N)$. Then
\begin{displaymath}
\langle C,Q\rangle =\sum_{k=-n}^n c_k\bar{q}_k = \int_{-\pi}^\pi P(e^{i\theta})d\nu =: \kappa,
\end{displaymath}
where $\kappa$ is a constant. Now, let $a(z)=a_0z^n+\dots +a_{n-1}z +a_n$ be the stable polynomial spectral factor of $Q(z)$, i.e., $a(z)a(z)^*=Q(z)$. Then 
\begin{displaymath}
\kappa=\langle C,Q\rangle =\mathbf{a}^* T_n\mathbf{a},
\end{displaymath}
where $\Tb_n$ is the Toeplitz matrix of  $\mathbf{c}$ and $\mathbf{a}=(a_0,a_1,\dots,a_n)\Tr$. If $\cb$ is restricted to the compact subset $K\in\mathfrak{C}_+$, the eigenvalues of $T_n$ are bounded away from zero. Hence $T_n\geq \varepsilon I$ for some $\epsilon >0$, and consequently 
\begin{displaymath}
\|\mathbf{a}\|^2 \leq \frac{1}{\varepsilon}\mathbf{a}^* T_n\mathbf{a}=\frac{\kappa}{\varepsilon}.
\end{displaymath}
Consequently, $\|\qb\|$, where $\qb=(q_0,q_1,\dots,q_n)$, is also bounded, and hence so is
$(F^P)^{-1}(K)$.
\end{proof}

\begin{lemma}\label{Fproper}
The moment map $F^P:\,\mathfrak{P}_+(N)\to \mathfrak{C}_+(N)$ is proper. 
\end{lemma}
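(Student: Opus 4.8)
The plan is to combine Lemma~\ref{boundedlem} with an argument that prevents a sequence in the preimage from escaping to the boundary $\partial\mathfrak{P}_+(N)$. Properness requires that, for every compact $K\subset\mathfrak{C}_+(N)$, the set $(F^P)^{-1}(K)$ be compact; since everything takes place in the finite-dimensional space $\mathfrak{P}$, it suffices to establish sequential compactness. So I would take an arbitrary sequence $\{Q_j\}\subset (F^P)^{-1}(K)$ and show it has a subsequence converging to a limit that again lies in $(F^P)^{-1}(K)$.

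The first step is immediate from what precedes. Because $K\subset\mathfrak{C}_+(N)\subset\mathfrak{C}_+$, Lemma~\ref{boundedlem} guarantees that the coefficient vectors $\qb_j=(q_0^{(j)},\dots,q_n^{(j)})$ are bounded. Passing to a subsequence, I may therefore assume $Q_j\to\hat{Q}$, with the limit necessarily lying in the closure $\overline{\mathfrak{P}_+(N)}$. The real content of the proof is to rule out the possibility that $\hat{Q}\in\partial\mathfrak{P}_+(N)$, that is, that $\hat{Q}(\zeta_{k_0})=0$ for some grid point $\zeta_{k_0}$.

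This boundary exclusion is the main obstacle, and the key device is the $k=0$ moment condition. Writing the zeroth component of $F^P(Q_j)=\cb^{(j)}\in K$ as the finite Riemann sum $c_0^{(j)}=\frac{1}{2N}\sum_{l=-N+1}^N P(\zeta_l)/Q_j(\zeta_l)$, I observe that every summand is strictly positive, since both $P$ and $Q_j$ lie in $\mathfrak{P}_+(N)$. If $Q_j(\zeta_{k_0})\to 0$ while $P(\zeta_{k_0})>0$ (the latter holding because $P\in\mathfrak{P}_+(N)$), then the single term $P(\zeta_{k_0})/Q_j(\zeta_{k_0})$ diverges to $+\infty$; by positivity of all the remaining terms this forces $c_0^{(j)}\to+\infty$. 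But $K$ is compact, hence bounded, so the scalars $c_0^{(j)}$ are bounded, a contradiction. Therefore $\hat{Q}(\zeta_k)>0$ for every $k$, i.e.\ $\hat{Q}\in\mathfrak{P}_+(N)$.

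It then remains only to close the loop. Since $F^P$ is continuous on $\mathfrak{P}_+(N)$ and $K$ is closed, $F^P(\hat{Q})=\lim_j F^P(Q_j)\in K$, so $\hat{Q}\in (F^P)^{-1}(K)$. Thus every sequence in $(F^P)^{-1}(K)$ has a subsequence converging within the set, which establishes compactness of the preimage and hence properness of $F^P$. Combined with injectivity (the preceding lemma) and the homeomorphism criterion of Theorem~2.6 in \cite{BLvariational}, this shows $F^P$ is a homeomorphism, so the moment equations $F^P(Q)=\cb$ admit a unique solution in $\mathfrak{P}_+(N)$, completing the proof of Theorem~\ref{mainthm}.
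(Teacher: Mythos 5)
Your proof is correct and follows essentially the same route as the paper: boundedness of the preimage from Lemma~\ref{boundedlem}, then exclusion of the boundary by noting that a vanishing $\hat{Q}(\zeta_{k_0})$ would force a sum of positive terms constrained by the compactness of $K$ to diverge. The only (immaterial) difference is that you extract the contradiction from the zeroth moment $c_0^{(j)}=\int P/Q_j\,d\nu$, whereas the paper uses the pairing $\langle C^{(k)},P\rangle=\int P^2/Q^{(k)}\,d\nu$.
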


\begin{proof}
Let $K$ be a compact subset of $\mathfrak{C}_+(N)$, and let $\cb^{(k)}$ be a sequence in $K$ converging to $\hat{\cb}\in K$. Since  $(F^P)^{-1}(K)$ is bounded (Lemma~\ref{boundedlem}), there is a convergent sequence $Q^{(k)}$ in the preimage of the sequence $\cb^{(k)}$ converging to some limit $\hat{Q}$. We want to show that $\hat{Q}\in (F^P)^{-1}(K)$. The only way this can fail is that $\hat{Q}$ belongs to  $\partial\mathfrak{P}_+(N)$, the boundary of   $\mathfrak{P}_+(N)$.  We observe that 
\begin{displaymath}
\langle C^{(k)},P\rangle =  \int_{-\pi}^\pi \frac{P^2}{Q^{(k)}}d\nu, 
\end{displaymath}
and consequently, since $P\in\mathfrak{P}_+(N)$,
\begin{displaymath}
\sum_{j=-N+1}^N  \frac{P(\zeta_j)^2}{\hat{Q}(\zeta_j)} =\langle \Hat{C},P\rangle,
\end{displaymath}
which requires that  $\hat{Q}(\zeta_j)\ne 0$ for all $j$. However, $\hat{Q}$ can only belong to $\partial\mathfrak{P}_+(N)$ if some $\hat{Q}(\zeta_j)$ equals zero. Hence $\hat{Q}\not\in\partial\mathfrak{P}_+(N)$, as required.
\end{proof}

This concludes the proof of Theorem~\ref{mainthm}. 

\subsection{Proof of  Theorem~\ref{homeomorphism}}

We have already proven above that $F^P$ is a homeomorphism. It remains to prove that $G^c$ is. For this we need two more lemmas.

\begin{lemma}
For each fixed $\cb\in\mathfrak{C}_+(N)$, the map $G^c:\, \mathfrak{Q}_+(N)\to \mathfrak{P}_+(N)$ is injective.
\end{lemma}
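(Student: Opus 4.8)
The plan is to establish injectivity by contradiction, reducing the assertion to the uniqueness already contained in Theorem~\ref{mainthm}. For the fixed data $\cb\in\mathfrak{C}_+(N)$, the map $G^c$ associates to an admissible element $Q\in\mathfrak{Q}_+(N)$ the numerator $P\in\mathfrak{P}_+(N)$ for which the spectral density $\Phi=P/Q$ reproduces $\cb$ through the moment conditions \eqref{momentconditions}. Injectivity thus amounts to the statement that two \emph{distinct} elements of the domain $\mathfrak{Q}_+(N)$ cannot be carried to one and the same point of $\mathfrak{P}_+(N)$; equivalently, a single numerator cannot be shared by two different admissible denominators.

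First I would suppose, toward a contradiction, that $G^c$ is not injective, so that there exist $Q_1,Q_2\in\mathfrak{Q}_+(N)$ with $Q_1\neq Q_2$ and yet $G^c(Q_1)=G^c(Q_2)=:P\in\mathfrak{P}_+(N)$. Unwinding the definition of $G^c$, this says precisely that $P/Q_1$ and $P/Q_2$ both satisfy the moment conditions \eqref{momentconditions} for the same $\cb$ and with the \emph{same} numerator $P$; that is, they are two solutions of the circulant rational covariance extension problem attached to the fixed pair $(\cb,P)$. I would then invoke Theorem~\ref{mainthm}: since $\cb\in\mathfrak{C}_+(N)$ and $P\in\mathfrak{P}_+(N)$, there is a \emph{unique} $Q\in\mathfrak{P}_+(N)$ with $\Phi=P/Q$ satisfying \eqref{momentconditions}. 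Because $\mathfrak{Q}_+(N)\subseteq\mathfrak{P}_+(N)$, both $Q_1$ and $Q_2$ are legitimate candidates, and uniqueness forces $Q_1=Q_2$, contradicting $Q_1\neq Q_2$. Hence $G^c$ is injective.

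The step requiring the most care — and really the only obstacle — is the correct unwinding of the definition of $G^c$: one must verify that the hypothesis $G^c(Q_1)=G^c(Q_2)$ produces two solutions with a \emph{common numerator} $P$ (so that the uniqueness in $Q$ of Theorem~\ref{mainthm} applies), rather than two solutions with a common denominator, which would instead encode the well-definedness of $G^c$ and prove nothing here. Once the roles of $P$ and $Q$ are pinned down, the argument is exactly the contrapositive of the uniqueness half of Theorem~\ref{mainthm}: fixing the numerator, the moment conditions admit at most one denominator in $\mathfrak{P}_+(N)$, and this is precisely what forbids two distinct admissible denominators from being identified by $G^c$. It remains only to note that membership $Q_1,Q_2\in\mathfrak{Q}_+(N)$ guarantees $Q_1,Q_2\in\mathfrak{P}_+(N)$, so that Theorem~\ref{mainthm} is indeed applicable.
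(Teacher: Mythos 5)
Your proof is correct, and it is non-circular: Theorem~\ref{mainthm} is fully established in the appendix before this lemma is needed, and its proof (injectivity of $F^P$ via strict convexity of $\mathbb{J}_P$, plus properness) nowhere uses the present lemma. But your route is genuinely different from the paper's. You unwind $G^c(Q_1)=G^c(Q_2)=P$ — correctly, in the $Q\mapsto P$ orientation that the lemma statement and the paper's own proof use, even though the running text defines $G^c$ in the opposite direction — into the statement $F^P(Q_1)=F^P(Q_2)=\cb$, note $\mathfrak{Q}_+(N)\subseteq\mathfrak{P}_+(N)$, and quote the uniqueness half of Theorem~\ref{mainthm}; in effect the lemma becomes an instance of the already-proved injectivity of $F^P$. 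The paper instead gives a self-contained elementary computation: equality of the moments gives $\int_{-\pi}^\pi e^{ik\theta}\,(Q_1-Q_2)P/(Q_1Q_2)\,d\nu=0$ for $k=0,1,\dots,n$, so the linear combination with coefficients $q_k^{(1)}-q_k^{(2)}$ yields $\int_{-\pi}^\pi (Q_1-Q_2)^2P/(Q_1Q_2)\,d\nu=0$; since $P$, $Q_1$, $Q_2$ are positive at the $2N$ nodes and $d\nu$ places mass $\frac{1}{2N}$ at each $\zeta_j$, the nonnegative integrand must vanish at every node, whence $Q_1(\zeta_j)=Q_2(\zeta_j)$ for all $j$, and a pseudo-polynomial of degree $n<N$ agreeing at all $2N$ nodes is determined, so $Q_1=Q_2$. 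What each buys: your argument is shorter and makes the logical structure transparent — injectivity of the $Q\mapsto P$ direction is precisely single-valuedness of the $P\mapsto Q$ direction already guaranteed by Theorem~\ref{mainthm}; the paper's argument is independent of the convexity machinery, and its quadrature-positivity trick is the same mechanism reused in the subsequent properness lemma for $G^c$, which keeps the appendix self-contained. Your flagged concern about the orientation of $G^c$ was exactly the right thing to worry about, and your resolution matches the paper's usage.
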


\begin{proof}
Suppose that $G^c(Q_1)=G^c(Q_2)=P$ for some $Q_1,Q_2\in\mathfrak{Q}_+(N)$. We want to show that $Q_1=Q_2$. To this end, since
\begin{displaymath}
\int_{-\pi}^\pi e^{ik\theta} \frac{(Q_1-Q_2)P}{Q_1Q_2}d\nu=0,\quad k=0,1,\dots,n,
\end{displaymath}
we have
\begin{displaymath}
\int_{-\pi}^\pi \frac{(Q_1-Q_2)^2P}{Q_1Q_2}d\nu=\sum_{k=-n}^n\left(\left[ q_k^{(1)}-q_k^{(2)}\right]\int_{-\pi}^\pi e^{ik\theta} \frac{(Q_1-Q_2)P}{Q_1Q_2}d\nu\right) =0,
\end{displaymath}
where
\begin{displaymath}
Q_\ell(z) =\sum_{k=-n}^n q_k^{(\ell)}z^k,\quad \ell=1,2,
\end{displaymath}
and, consequently, $Q_1(\zeta_j) =Q_2(\zeta_j)$ for all $j$, as claimed. 
\end{proof}

\begin{lemma}
For each fixed $\cb\in\mathfrak{C}_+(N)$, the map $G^c:\, \mathfrak{Q}_+(N)\to \mathfrak{P}_+(N)$ is proper.
\end{lemma}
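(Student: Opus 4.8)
The plan is to mirror the proof that $F^P$ is proper (Lemma~\ref{Fproper}), exploiting that on the finite-dimensional cone $\mathfrak{Q}_+(N)$ compactness is equivalent to being closed and bounded. So I would fix a compact set $K\subset\mathfrak{P}_+(N)$ and take any sequence $\{Q^{(m)}\}\subset(G^c)^{-1}(K)$, writing $P^{(m)}:=G^c(Q^{(m)})\in K$. It suffices to extract a subsequence converging to a limit $\hat Q$ that again lies in $(G^c)^{-1}(K)$. The two points to establish are that $\{Q^{(m)}\}$ is bounded and that its limit cannot escape to $\partial\mathfrak{P}_+(N)$.

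For boundedness I would reuse the spectral-factor estimate of Lemma~\ref{boundedlem}. Since $F^{P^{(m)}}(Q^{(m)})=\cb$, summing the moment conditions against $\bar q_k^{(m)}$ and using that $Q$ is real on $\Tbb_{2N}$ gives the identity
\[
\langle C,Q^{(m)}\rangle=\int_{-\pi}^\pi P^{(m)}\,d\nu=:\kappa_m .
\]
Because $P^{(m)}$ ranges over the compact set $K$ of strictly positive symbols, $\kappa_m$ is bounded above by a constant independent of $m$. Factoring $Q^{(m)}(z)=a^{(m)}(z)a^{(m)}(z)^*$ and writing $\kappa_m=(\mathbf a^{(m)})^*\Tb_n\mathbf a^{(m)}$, the hypothesis $\cb\in\mathfrak{C}_+(N)\subset\mathfrak{C}_+$ gives $\Tb_n\ge\varepsilon I$ for a fixed $\varepsilon>0$, whence $\|\mathbf a^{(m)}\|^2\le\kappa_m/\varepsilon$ is bounded; therefore so are the coefficients $\qb^{(m)}$. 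Passing to a convergent subsequence, $Q^{(m)}\to\hat Q$ and $P^{(m)}\to\hat P\in K$.

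It remains to rule out $\hat Q\in\partial\mathfrak{P}_+(N)$, i.e.\ $\hat Q(\zeta_{j_0})=0$ for some $j_0$. Here I would argue directly from the zeroth moment rather than through $\langle C,P\rangle$ as in Lemma~\ref{Fproper}: since
\[
c_0=\frac{1}{2N}\sum_{j=-N+1}^N\frac{P^{(m)}(\zeta_j)}{Q^{(m)}(\zeta_j)}
\]
is a sum of strictly positive terms and $\hat P(\zeta_{j_0})>0$ because $\hat P\in K\subset\mathfrak{P}_+(N)$, the $j_0$-term would tend to $+\infty$ while all other terms stay nonnegative, forcing $c_0\to+\infty$ and contradicting that $c_0$ is fixed. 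Hence $\hat Q(\zeta_j)\ne0$ for every $j$, so $\hat Q\in\mathfrak{P}_+(N)$.

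Finally, I would invoke continuity of $G^c$ on $\mathfrak{P}_+(N)$: for fixed $\cb$ the defining relation $c_k=\int e^{ik\theta}(P/Q)\,d\nu$ is a linear system $M(Q)\mathbf p=\mathbf c$ whose coefficient matrix $M(Q)_{k\ell}=\frac1{2N}\sum_j Q(\zeta_j)^{-1}\zeta_j^{k-\ell}$ is Hermitian positive definite (its quadratic form is $\frac1{2N}\sum_j|V(\zeta_j)|^2/Q(\zeta_j)$, strictly positive for $V\not\equiv0$ since $n<N$) and depends continuously on $Q\in\mathfrak{P}_+(N)$; thus $\hat P=G^c(\hat Q)=\lim_m P^{(m)}\in K$, so $\hat Q\in\mathfrak{Q}_+(N)$ and in fact $\hat Q\in(G^c)^{-1}(K)$. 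This shows $(G^c)^{-1}(K)$ is sequentially compact, establishing properness. I expect the boundary-escape step to be the only genuinely delicate point, but it is actually cleaner than in Lemma~\ref{Fproper}, since the strict positivity of the limit numerator $\hat P$ on the compact set $K$ makes the blow-up of the $c_0$ sum immediate.
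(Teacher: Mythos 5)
Your proof is correct and follows essentially the same route as the paper's: boundedness of $(G^c)^{-1}(K)$ via the identity $\langle C,Q\rangle=\int P\,d\nu$ together with the spectral-factor/Toeplitz estimate of Lemma~\ref{boundedlem}, and then exclusion of the boundary because a sum of positive terms involving $1/\hat{Q}(\zeta_j)$ would have to blow up while equaling a fixed finite moment quantity. The paper runs that blow-up argument on $\langle C,P^{(k)}\rangle=\int (P^{(k)})^2/Q^{(k)}\,d\nu$ instead of on $c_0$, and omits your final continuity/identification step, but these are cosmetic differences.
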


\begin{proof}
The proof follows the same pattern as that of Lemma~\ref{Fproper}. Hence, let  $K$ be a compact subset of $\mathfrak{P}_+(N)$, and let $P^{(k)}$ be a sequence in $K$ converging to $\hat{P}\in K$. Since  $(G^c)^{-1}(K)\subset \mathfrak{Q}_+(N)$ is bounded (Lemma~\ref{boundedlem}), there is a convergent sequence $Q^{(k)}$ in the preimage of the sequence $P^{(k)}$ converging to some limit $\hat{Q}$. In order to ensure that $\hat{Q}\in (G^c)^{-1}(K)$, we must demonstrate that $\hat{Q}\not\in\partial\mathfrak{P}_+(N)$. To this end, note that 
\begin{displaymath}
\langle C,P^{(k)}\rangle =  \int_{-\pi}^\pi \frac{(P^{(k)})^2}{Q^{(k)}}d\nu, 
\end{displaymath}
and consequently, since $c\in\mathfrak{C}_+(N)$ and $\hat{P}\in K\subset\mathfrak{P}_+(N)$,
\begin{displaymath}
\sum_{j=-N+1}^N  \frac{\hat{P}(\zeta_j)^2}{\hat{Q}(\zeta_j)} =\langle C,\hat{P}\rangle >0,
\end{displaymath}
Since  $\hat{P}(\zeta_j)>0$ for all $j$, this requires that  $\hat{Q}(\zeta_j)>0$ for all $j$. Hence $\hat{Q}\not\in\partial\mathfrak{P}_+(N)$, as required.
\end{proof}

The map $G^c$ is a continuous map between connected spaces of  the same dimension $n+1$. Noting that \eqref{Hessian} is positive definite, the continuity follows from  the inverse function theorem applied to the equation $F^P(Q)=\cb$. Then, since $G^c$ is injective and proper, it follows from Theorem 2.6 in \cite{BLvariational}, that it is a homeomorphism.

\subsection{Proof of  Theorem~\ref{dualboundarythm}}

We follow the lines of the proof of the Main Lemma in \cite[p. 10]{BLmoments}.
Let $(P_\ell)$ be a sequence in $\mathfrak{P}_+(N)$ converging to $P\in\overline{\mathfrak{P}_+(N)}\setminus \{0\}$. Then there is a positive constant $K$ such that $P_\ell(\zeta_j)\leq K$ for  $\ell=1,2,3,\dots$ and $j=-N+1,\dots,N$. For each $\ell$, let $Q_\ell$ be the unique minimizer of
\begin{displaymath}
\mathbb{J}_{P_\ell}(Q)= \langle C,Q\rangle -\int_{-\pi}^\pi  P_\ell(e^{i\theta})\log Q(e^{i\theta})d\nu
\end{displaymath}
as prescribed by Theorem~\ref{optthm}. Then
\begin{equation}
\label{momentsell}
\int_{-\pi}^\pi e^{ik\theta}\frac{P_\ell(e^{i\theta})}{Q_\ell(e^{i\theta})}d\nu =c_k, \quad k=1,2,\dots,n.
\end{equation}
Now suppose that the sequence $Q_\ell$ is unbounded. Then there is a subsequence, which we shall also denote $Q_\ell$, for which $\| Q_\ell\|_\infty>1$ and $\| Q_\ell\|_\infty\to\infty$. For each such $Q$, there is an $\varepsilon>0$ such that 
\begin{equation}
\label{Jbound}
\mathbb{J}_{P_\ell}(Q)\geq \varepsilon\|Q\|_\infty-K\log\|Q\|_\infty.
\end{equation}
To see this, first note 
%that $\langle C,Q\rangle = \langle C,Q/\|Q\|_\infty\rangle\|Q\|_\infty$ and 
that, since $T_n>0$, $\langle C,Q/\|Q\|_\infty\rangle$ has a minimum $\varepsilon>0$ on the compact set $\{ Q\in \overline{\mathfrak{P}_+(N)}\mid \|Q\|_\infty=1\}$, we have $\langle C,Q\rangle\geq \varepsilon\|Q\|_\infty$. Then
\begin{displaymath}
\mathbb{J}_{P_\ell}(Q)\geq \varepsilon\|Q\|_\infty-\int_{-\pi}^\pi P_\ell\log\left(\frac{Q}{\|Q\|_\infty}\right)d\nu-K\log\|Q\|_\infty,
\end{displaymath}
where the second term is nonnegative and can be deleted. 

Next, let $\tilde{Q}\in\mathfrak{P}_+(N)$ be arbitrary. Then, by optimality, $\mathbb{J}_{P_\ell}(\tilde{Q})\geq\mathbb{J}_{P_\ell}(Q_\ell)$. Since $\mathbb{J}_{P_\ell}(\tilde{Q})\to\mathbb{J}_P(\tilde{Q})$ as $\ell\to\infty$, there is a positive constant $L$ such that 
\begin{equation*}
L\geq \mathbb{J}_{P_\ell}(\tilde{Q})\geq\mathbb{J}_{P_\ell}(Q_\ell), \quad \ell=1,2,3,\dots,
\end{equation*}
which together with \eqref{Jbound} yields
\begin{equation}
\label{ }
L\geq\varepsilon\|Q_\ell\|_\infty-K\log\|Q_\ell\|_\infty, \quad \ell=1,2,3,\dots.
\end{equation}
Then, comparing linear and logarithmic growth, we see that the sequence $(Q_\ell)$ is bounded, contrary to hypothesis.  Consequently, there is a convergent subsequence (for convenience also indexed by  $\ell$) such that $Q_\ell\to\hat{Q}$, and,  since  $P_\ell\to P\ne 0$, \eqref{momentsell} implies that $\hat{Q}\ne 0$. Hence, setting $\Phi_\ell:=P_\ell/Q_\ell$ and $\hat{\Phi}:= P/\hat{Q}$, 
$\Phi_\ell\to\hat{\Phi}$, 
and hence, taking limits in \eqref{momentsell}, we obtain
\begin{displaymath}
\int_{-\pi}^\pi e^{ik\theta}\hat{\Phi}(e^{i\theta})d\nu =c_k, \quad k=1,2,\dots,n,
\end{displaymath}
showing that $\hat{Q}$ is the required minimizer satisfying the moment conditions.

\subsection{Proof of Theorem~\ref{PQthm}}

We begin by showing that the sublevel set $\mathbb{J}^{-1}(\infty,r]$ is compact for each $r\in\mathbb{R}$. The sublevel set consists of those $(P,Q)\in\overline{\mathfrak{P}_+^\mathbf{o}}(N)\times\overline{\mathfrak{P}_+(N)}$ for which 
\begin{displaymath}
r\geq \mathbb{J}_1(P,Q) +\mathbb{J}_2(P),
\end{displaymath}
where
\begin{align*}
\label{}
 &\mathbb{J}_1(P,Q)   =  \langle C,Q\rangle-\int_{-\pi}^\pi  P(e^{i\theta})\log\det Q(e^{i\theta})d\nu \\
& \mathbb{J}_2(P) =  -\langle M,P\rangle +\int_{-\pi}^\pi  P(e^{i\theta})\log\det P(e^{i\theta})d\nu
\end{align*}
Since $\mathfrak{P}_+^\mathbf{o}(N)$ is a bounded set that is bounded away from zero, there is a positive constant $K$ such that $\|P\|_\infty \leq K$ and a $\rho\in\mathbb{R}$ such that $\mathbb{J}_2(P)\geq \rho$ for all $P\in \mathfrak{P}_+^\mathbf{o}(N)$. Hence, in view of the estimates leading to \eqref{Jbound}, 
\begin{displaymath}
r-\rho\geq \mathbb{J}_1(P,Q) \geq \varepsilon \|Q\|_\infty -K\log\|Q\|_\infty,
\end{displaymath}
and therefore, comparing linear and logarithmic growth, it follows that the sublevel set $\mathbb{J}^{-1}(\infty,r]$ is bounded. Since it is also closed, it is compact, as claimed. 

Since $\mathbb{J}$ thus has compact sublevel sets, there is a minimizer $(\hat{P},\hat{Q})$. Then clearly $\hat{Q}$ is a minimizer of $\mathbb{J}_{\hat{P}}$, and hence, by Theorem~\ref{dualboundarythm}, $\hat{\Phi}:=\hat{P}/\hat{Q}$ satisfies the moment conditions \eqref{momentconditions}. If $\hat{P}\in\mathfrak{P}_+^\mathbf{o}(N)$, then the minimizer must satisfy the stationarity condition $\partial\mathbb{J}/\partial \bar{p}_k =0$, $k=1,2,\dots,N$, and hence, by \eqref{gradP}, $\hat{\Phi}$ also satisfies the  logarithmic moment conditions \eqref{cepstrum}. Since 
\begin{displaymath}
\mathbb{I}(\hat{P},\hat{Q})=L(\hat{P},\hat{Q})\geq L(P,Q) \quad\text{for all $(P,Q)$},
\end{displaymath}
and $L(P,Q)=\mathbb{I}(P,Q)$ for all $(P,Q)$ satisfying the moment conditions \eqref{momentconditions}, $(\hat{P},\hat{Q})$ solves the primal problem. By Theorem~\ref{mainthm}, $\hat{Q}\in\mathbb{P}_+(N)$.  

It remains to prove that the optimal solution is unique if $(\hat{P},\hat{Q})\in\mathfrak{P}_+^\mathbf{o}(N)\times\mathfrak{P}_+(N)$. To this end, we form the directional derivative
\begin{displaymath}
\delta\mathbb{J}(\hat{P},\hat{Q};\delta P,\delta Q)=\langle C-\hat{P}\hat{Q}^{-1},\delta Q\rangle + \langle \log(\hat{P}\hat{Q}^{-1})-M, \delta P\rangle
\end{displaymath}
and the second directional derivative
\begin{displaymath}
\delta^2\mathbb{J}(\hat{P},\hat{Q};\delta P,\delta Q)=\langle (\delta P-\hat{P}\hat{Q}^{-1}\delta Q), \hat{P}^{-1}(\delta P-\hat{P}\hat{Q}^{-1}\delta Q)\rangle \geq 0
\end{displaymath}
with equality if and only if 
\begin{displaymath}
\delta P-\hat{P}\hat{Q}^{-1}\delta Q=0.
\end{displaymath}
Then, however, 
\begin{equation*}
\int_{-\pi}^\pi \hat{P}\hat{Q}^{-1}\delta Q\,d\nu= \int_{-\pi}^\pi õ\delta P\,d\nu=0,
\end{equation*}
since  the pseudo-polynomial $\delta P$ has no constant term, as $P(0)=1$. Therefore, choosing $\delta Q=1$, it follows from Theorem~\ref{mainthm} that 
\begin{displaymath}
c_0=\int_{-\pi}^\pi \hat{P}\hat{Q}^{-1}\,d\nu=0,
\end{displaymath}
which is a contradiction. Consequently,
\begin{displaymath}
\delta^2\mathbb{J}(\hat{P},\hat{Q};\delta P,\delta Q)>0
\end{displaymath}
for all $\delta P, \delta Q$; i.e., the Hessian of $\mathbb{J}$  is positive definite, and hence $\mathbb{J}$ is strictly convex. Therefore uniqueness follows.

\end{document}